\documentclass[12pt]{article}
\usepackage[latin1]{inputenc}

\usepackage{amsfonts}
\usepackage{amssymb}
\usepackage{amsthm}
\usepackage{amsmath}
\usepackage{graphicx}
\usepackage{color}
\oddsidemargin -0.4in \evensidemargin -0.4in \textwidth=181mm
\topmargin=-1.2in \textheight=25.2cm
\usepackage{amscd}
\usepackage{multirow}

\newtheorem{theorem}{Theorem}[section]
\newtheorem{proposition}[theorem]{Proposition}
\newtheorem{definition}[theorem]{Definition}

\newtheorem{lemma}[theorem]{Lemma}
\newtheorem{corollary}[theorem]{Corollary}
\newtheorem{remark}[theorem]{Remark}

\numberwithin{equation}{section}

\usepackage{amsfonts}
\usepackage{amsthm}
\usepackage{amsmath}
\usepackage{amscd}
\usepackage{multirow}


\newcommand{\CC}{{\mathbb C}}
\newcommand{\C}{{\mathbb C}}
\newcommand{\cD}{{\mathcal D}}

\newcommand{\cH}{{\mathcal H}}
\newcommand{\cO}{{\mathcal O}}
\newcommand{\cQ}{{\mathcal Q}}
\newcommand{\cM}{{\mathcal M}}
\newcommand{\cF}{{\mathcal F}}
\newcommand{\cE}{{\mathcal E}}

\newcommand{\RR}{{\mathbb R}}
\newcommand{\R}{{\mathbb R}}
\newcommand{\ZZ}{{\mathbb Z}}

\newcommand{\NN}{{\mathbb N}}
\newcommand{\N}{{\mathbb N}}

\newcommand\inw{{{\operatorname{in}}_{\omega}}}




\newcommand\inww{{{\operatorname{in}}_{(-\omega , \omega )}}}

\newcommand\Irr{\operatorname{Irr}}
\renewcommand{\ker}{\operatorname{Ker}}



\begin{document}


\title{Gevrey solutions of  irregular hypergeometric systems in two
variables}
\author{M.C. Fern\'{a}ndez-Fern\'{a}ndez and F.J. Castro-Jim\'{e}nez \thanks{Both
authors partially supported by MTM2007-64509 and FQM333. The first
author is also supported by the FPU Grant AP2005-2360, MEC (Spain).
e.mail addresses: {\tt mcferfer@us.es}, {\tt castro@us.es}}\\
Departamento de \'{A}lgebra \\ Universidad de Sevilla}


\date{20 November 2008}
\maketitle

\begin{abstract}
We describe the Gevrey series solutions at singular points of the
irregular hypergeometric system (GKZ system) associated with an
affine plane monomial curve. We also describe the irregularity
complex of such a  system with respect to its singular support and
in particular we prove, using elementary methods,  that this
irregularity complex is a perverse sheaf as assured by a theorem of
Z. Mebkhout.
\end{abstract}


\section*{Introduction}
To every row integer matrix $A=(a_1\,\, a_2)$, with positive and
relatively prime entries, and every complex parameter $\beta\in \CC$
we can associate the hypergeometric system $\cM_A(\beta)$ defined by
the following two linear partial differential equations:
$$\left(\frac{\partial}{\partial x_1}\right)^{a_2} (\varphi) - \left(\frac{\partial}{\partial
x_2}\right)^{a_1}(\varphi)=0
$$

$$ a_1 x_1 \frac{\partial \varphi}{\partial x_1} + a_2 x_2 \frac{\partial \varphi}{\partial
x_2} -\beta \varphi =0.
$$

General hypergeometric systems have been introduced by I.M.
Gel'fand, M.I. Graev, M.M. Kapranov and A.V. Zelevinsky (\cite{GGZ},
\cite{GZK1}, \cite{GZK}) and their  analytic solutions, at a generic
point in $\CC^n$, have been widely studied (see e.g. \cite{GZK1},
\cite{GZK}, \cite{Adolphson}, \cite{SST},
\cite{ohara-takayama-2007}).

In this work we explicitly describe the Gevrey solutions --at
singular points-- of the hypergeometric system $\cM_A(\beta)$
associated with $A=(a_1\,\, a_2)$ and $\beta\in \CC$. To this end we
will use the $\Gamma$--series introduced in \cite{GZK} and also used
in \cite{SST} in a very useful and slightly different form. We use
these $\Gamma$--series to describe the {\em Gevrey filtration of the
irregularity complex}  of $\cM_A(\beta)$ with respect to the
coordinate axes and in particular with respect to the singular
support of the system.

Despite the simplicity of the equations defining these
hypergeometric systems the study of its Gevrey solutions is quite
involved. In \cite{fernandez-castro-curves-2008} the study of the
Gevrey solutions of the hypergeometric system associated with any
affine monomial curve in $\CC^n$ is reduced to the two dimensional
case by using deep results in $\cD$--module theory. This justifies
our separated treatment for the two variables case.

The behavior of  Gevrey solutions of a hypergeometric system (and
more generally of any holonomic $\cD$--module) is closely related to
its  irregularity  complex as proved by Z. Mebkhout and by Y.
Laurent and Z. Mebkhout (\cite{Mebkhout}, \cite
{Mebkhout_comparison89}, \cite{Laurent-Mebkhout},
\cite{Laurent-Mebkhout2}). For any hypergeometric system in two
variables  we will describe its irregularity complex without using
any of the deep results in the above references. In particular we
will prove (see Conclusions) that the irregularity complex is a
perverse sheaf.

The paper has the following structure. In Section
\ref{Gevrey-series} we recall the definition of Gevrey series. In
Section \ref{GGZ-GKZ-systems} we recall the definition of
$\Gamma$--series and summarize the description of the holomorphic
solutions of $\cM_A(\beta)$ at a generic point of $\CC^2$. Section
\ref{subsub-gevrey-(ab)} is devoted to the definition --due to Z.
Mebkhout \cite{Mebkhout}-- of $\Irr_Y(\cM_A(\beta))$, the
irregularity complex of the system $\cM_A(\beta)$ with respect to
its singular support $Y$ in $\CC^2$. In Section \ref{irr-at-zero} we
prove that the germ of the irregularity complex
$\Irr_Y(\cM_A(\beta))$ at the origin is zero. In Section
\ref{irr-at-p} we first prove that the complex
$\Irr_Y(\cM_A(\beta))_p$ for $p\in Y$, $p\not= (0,0)$, is
concentrated in degree 0 and then we describe a basis of its 0-th
cohomology group. We also prove, by elementary methods, that the
irregularity complex $\Irr_Y(\cM_A(\beta))$ is a {\em perverse
sheaf} on $Y$. This is a very particular case of a theorem of Z.
Mebkhout \cite[Th. 6.3.3]{Mebkhout}.

Some related results can be found in \cite{oaku-on-regular-2007},
\cite{takayama-modified-2007} and also in \cite{majima} and
\cite{iwasaki}.

The second author would like to thank N. Takayama for his very
useful comments concerning logarithm-free hypergeometric series and
for his help, in April 2003,  computing the first example of Gevrey
solutions: the case of the hypergeometric system associated with the
matrix $A=(1\,\, 2)$ (i.e. with the plane curve $x^2-y=0$). The
authors would like to thank L. Narv\'{a}ez-Macarro and J.M. Tornero for
their very useful comments and suggestions.

\section{Gevrey series} \label{Gevrey-series}

Let us write $X=\CC^2$ with its structure of complex manifold,
${\mathcal O}_X $ (or simply $\cO$) the sheaf of holomorphic
functions on $X$ and ${\mathcal D}_X $ (or simply $\cD$) the sheaf
of linear differential operators with coefficients in $\cO_X$. The
sheaf $\cO_X$ has a natural structure of left $\cD_X$--module. Let
$Z$ be a  hypersurface (i.e. a plane curve) in $X$ with defining
ideal $\mathcal{I}_{Z}$. We denote by $\cO_{X|Z}$ the restriction to
$Z$ of the sheaf $\cO_X$ (and  we will also denote by $\cO_{X|Z}$
its extension by 0 on $X$). Recall that the formal completion of
$\cO_X$ along $Z$ is defined as
$$\cO_{\widehat{X|Z}}:= \lim_{\stackrel{\longleftarrow }{k}} \cO_{X} /
\mathcal{I}_{Z}^k.
$$
By definition $\cO_{\widehat{X|Z}}$ is a sheaf on  $X$ supported on
$Z$ and it has a natural structure of left $\cD_X$--module. We will
also denote by $\cO_{\widehat{X|Z}}$ the corresponding sheaf on $Z$.
We denote by $\cQ_Z$ the quotient sheaf defined by the following
exact sequence
$$0\rightarrow \cO_{X|Z} \longrightarrow \cO_{\widehat{X|Z}}
\longrightarrow \cQ_Z \rightarrow 0.$$ The sheaf $\cQ_Z$ has then a
natural structure of left $\cD_X$--module.


Assume $Y\subset X$ is a smooth  curve  and that it is locally
defined by $x_2=0$ for some system of local coordinates $(x_1,x_2)$
around a point $p\in Y$. Let us consider a real number $s\geq 1$. A
germ
$$f=\sum_{i\geq 0} f_i (x_1) x_2^i \in \cO_{\widehat{X|Y},p} $$ is
said to be a Gevrey series of order $s$ (along $Y$ at the point $p$)
if the power series
$$\rho_s (f) := \sum_{i\geq 0} \frac{1}{i!^{s-1}} f_i (x_1) x_2^i $$ is convergent  at $p$.

The sheaf $\cO_{\widehat{X|Y}}$ admits a natural filtration by the
sub-sheaves $\cO_{\widehat{X|Y}}(s)$ of Gevrey series of order $s$,
$1\leq s\leq  \infty$ where by definition
$\cO_{\widehat{X|Y}}(\infty)= \cO_{\widehat{X|Y}}$. So we have
$\cO_{\widehat{X|Y}}(1) = \cO_{{X|Y}}$. We can also consider the
induced filtration on $\cQ_Y$, i.e. the filtration by the
sub-sheaves $\cQ_{Y} (s)$ defined by the exact sequence:
\begin{align}\label{exact-sequence-gevrey-s}
0\rightarrow \cO_{X|Y} \longrightarrow \cO_{\widehat{X|Y}} (s)
\longrightarrow \cQ_Y (s) \rightarrow 0
\end{align}

\begin{definition}\label{def-gevrey-index}
Let $Y$ be a smooth  curve in  $X=\CC^2$ and let $p$ be a point in
$Y$. The Gevrey index of a formal power series $f\in
{\cO_{\widehat{X|Y},p}}$ with respect to $Y$ is the smallest $ 1
\leq s\leq \infty$ such that $f \in {\cO_{\widehat{X|Y}}}(s)_p$.
\end{definition}

\section{The hypergeometric system associated with an affine mo\-no\-mial plane curve}
\label{GGZ-GKZ-systems}

We denote by $A_2(\CC)$ or simply $A_2$ the complex Weyl algebra of
order $2$, i.e. the ring of linear differential operators with
coefficients in the polynomial ring $\CC[x]:=\CC[x_1,x_2]$. The
partial derivative $\frac{\partial}{\partial x_i}$ will be denoted
by $\partial_i$.

Let $A=(a\,\, b) $ be an integer nonzero row matrix and $\beta\in
\CC$. Let us denote by $E_A(\beta)$ the linear differential operator
$E_A(\beta) := a x_1 \partial_1+ b x_2
\partial_2 -\beta$. The toric ideal $I_A \subset
\CC[\partial]:=\CC[\partial_1,\partial_2] $ associated with $A$ is
generated by the binomial $\partial_1^{b'}-\partial_2^{a'}$ where
$a'=a/d$, \, $b'=b/d$ and $d=\gcd(a,b)$.  The algebraic plane curve
defined by $I_A$ is then an affine monomial plane curve.

The left ideal $A_2 I_A + A_2 E_A(\beta) \subset A_2$ is denoted by
$H_A(\beta)$ and it is called the {\em hypergeometric ideal}
associated with $(A,\beta)$. The (global) hypergeometric module
associated with $(A,\beta)$ is by definition (see \cite{GGZ},
\cite{GZK}) the quotient $M_A(\beta):=A_2/H_A(\beta)$.

To the pair $(A,\beta)$ we can also associate the corresponding
analytic hypergeometric $\cD_X$--module, denoted by $\cM_A(\beta)$,
which is the quotient of $\cD_X$ modulo the sheaf of left ideals in
$\cD_X$ generated by $H_A(\beta)$.



In this paper we will assume that $A=(a\,\, b)$ is an integer row
matrix with $0 < a < b$ and $\beta \in \CC$.  We can assume without
loss of generality that $a,b$ are relatively prime. Nevertheless
similar methods to the ones presented here can be applied to
different kind of plane monomial curves (see Remark \ref{-ab}).

The module $\cM_A(\beta)$ is the quotient of $\cD_X$ modulo the
sheaf of ideals generated by the operators
$P:=\partial_1^{b}-\partial_2^a$ and $E_A(\beta)=
ax_1\partial_1+bx_2\partial_2 -\beta$. Sometimes we will write
$E=E(\beta)=E_A(\beta)$ if no confusion is possible.

Although it can be deduced from general results (\cite{GGZ} and
\cite[Th. 3.9]{Adolphson}) a direct computation shows that the
characteristic variety of $\cM_A(\beta)$ is $T^*_X X \cup T^*_{Y} X$
where $Y=(x_2=0)$ and then the singular support of $\cM_A(\beta)$ is
the axis $Y$. The module  $\cM_A(\beta)$ is therefore  holonomic.

\subsection{Holomorphic solutions of $\cM_A(\beta)$ at a generic
point}\label{sol_generic_point}

By \cite[Th. 2]{GZK} and \cite[Cor. 5.21]{Adolphson} the dimension
of the vector space of holomorphic solutions of $\cM_A(\beta)$ at a
point $p\in X\setminus Y$ equals $b$. A basis of such vector space
of  solutions can be described, using $\Gamma$--series (\cite{GGZ},
\cite[Sec. 1]{GZK} and \cite[Sec. 3.4]{SST}) as follows. For $v\in
\CC^2$ and $u\in \ZZ^2$ let us denote
$$\Gamma[v;u]:=\frac{(v)_{u_-}}{(v+u)_{u_+}}$$ if $(v+u)_{u_+}\neq
0$ and $\Gamma[v;u]:=0$ otherwise. Here
$$(z)_\alpha = \prod_{i: \, \alpha_i>0} \prod_{j=0}^{\alpha_i-1}
(z_i-j)$$ is the Pochhammer symbol,  for any $z\in \CC^n$ and any
$\alpha \in \NN^n$.

For $j=0,\ldots,b-1$ let us consider
$$v^j=(j,\frac{\beta -j a}{b})\in \CC^2$$ and the corresponding
$\Gamma$--series $$\phi_{v^j} = x^{v^j} \sum_{m\geq 0} \Gamma[v^j;
u(m)] \left(\frac{x_1^{b}}{x_2^{a}}\right)^m \in
x^{v^j}\CC[[x_1,x_2^{-1}]]$$  with $u(m) = (bm,-am)\in L_A =
\ker_\ZZ(A)$, which defines a holomorphic function at any point
$p\in X\setminus Y$. This can be easily proven by applying
d'Alembert ratio test to the series in $\frac{x_1^{b}}{x_2^{a}}$
$$\psi :=\sum_{m\geq 0} \Gamma[v^j; u(m)]
\left(\frac{x_1^{b}}{x_2^{a}}\right)^m.$$

Writing $c_m:=\Gamma[v^j; u(m)]$ we have
$$\lim_{m\rightarrow \infty}
\left|\frac{c_{m+1}}{c_m}\right| = \lim_{m\rightarrow \infty}
\frac{(am)^a}{(bm)^{b}}=0.$$

Notice that $\phi_{v^j} \not\in \cO_X(X\setminus Y)$ if $\frac{\beta
-j a}{b}\not\in \ZZ$.

\section{Gevrey solutions of
$\cM_A(\beta)$}\label{subsub-gevrey-(ab)}


The definition of the irregularity (or the irregularity complex) of
a left coherent $\cD_X$--module $\cM$ has been given by Z. Mebkhout
\cite[(2.1.2), page 98 and (6.3.7)]{Mebkhout}. In dimension 2 this
definition is the following:


\begin{definition} (a) Let $Z$ be a curve in $X$. The irregularity
of $\mathcal{M}$ along $Z$ (denoted by $\operatorname{Irr}_Z(\cM)$)
is the solution complex of $\cM$ with values in $\cQ_Z$, i.e.
$$\operatorname{Irr}_Z (\mathcal{M}):=\RR \cH om_{\cD_{X}}
(\mathcal{M},\cQ_Z ). $$ (b) If $Y$ is a smooth curve in $X$ and for
each $1\leq s \leq \infty$, the irregularity of order $s$ of $\cM$
with respect to $Y$ is the complex $$\operatorname{Irr}_{Y}^{(s)}
(\cM ):=\R \cH om_{\cD_X }(\cM , \cQ_Y (s)).$$
\end{definition}

Since $\cO_{\widehat{X|Y}}(\infty)= \cO_{\widehat{X|Y}}$ we have
$\operatorname{Irr}_{Y}^{(\infty)} (\cM ) =
\operatorname{Irr}_{Y}^{} (\cM )$.  By definition, the irregularity
of $\cM$ along $Z$ (resp. $\operatorname{Irr}_{Y}^{(s)} (\cM )$) is
a complex in the derived category $D^b(\CC_X)$ and its support is
contained in $Z$ (resp. in $Y$).

In Sections \ref{irr-at-zero} and \ref{irr-at-p} we will describe
the cohomology of the irregularity complex $\Irr_Y(\cM_A(\beta))$,
and moreover we will compute a basis of the vector spaces
$$\mathcal{H}^i (\operatorname{Irr}_{Y}^{(s)}(\mathcal{M}_A
(\beta)))_p = \mathcal{E}xt^i_{\cD_X} ( \mathcal{M}_A (\beta) ,
\cQ_{Y}(s))_p$$ for $p\in Y$, $i\in \NN$ and  $1 \leq s \leq
\infty$.



\begin{lemma}
A free resolution of  $\mathcal{M}_{A}(\beta)$ is given by
\begin{align}
0\longrightarrow \cD \stackrel{\psi_1}{\longrightarrow} \cD^2
\stackrel{\psi_0}{\longrightarrow} \cD
\stackrel{\pi}{\longrightarrow} \mathcal{M}_{A}(\beta )
\longrightarrow 0 \label{resolucionD}
\end{align} where  $\psi_0$ is defined by the column matrix $(P,E)^t$, $\psi_1$ is defined by the row matrix
$(E+ab , -P)$ and  $\pi$ is the canonical projection.
\end{lemma}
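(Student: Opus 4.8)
The plan is to verify directly that the displayed sequence is a complex and that it is exact at every term. The operators are $P=\partial_1^b-\partial_2^a$ and $E=ax_1\partial_1+bx_2\partial_2-\beta$, with $\psi_0$ given by right-multiplication turning $(\,\cdot\,)$ into the column $(P,E)^t$ and $\psi_1$ given by the row $(E+ab,-P)$. First I would check $\psi_0\circ\psi_1=0$, which amounts to the single identity $(E+ab)P-PE=0$ in $\A$, i.e. $PE=(E+ab)P$. This is a commutation relation: since $E$ is the Euler-type operator $ax_1\partial_1+bx_2\partial_2-\beta$ and $P$ is homogeneous of degree $b$ for the weight that assigns $\partial_1$ weight $b$ and $\partial_2$ weight $a$ (more precisely, $[E,\partial_1^b]=-ab\,\partial_1^b$ and $[E,\partial_2^a]=-ab\,\partial_2^a$, so $[E,P]=-ab\,P$), one gets $PE=EP+ab\,P=(E+ab)P$. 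That settles that \eqref{resolucionD} is a complex.

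Next I would establish exactness. Exactness at the rightmost $\cD$ is immediate from the definition of $H_A(\beta)=\A P+\A E$: the image of $\psi_0$ is exactly this left ideal, so $\coker\psi_0=M_A(\beta)$ and $\pi$ is the canonical projection. Injectivity of $\psi_1$ holds because $\A$ is a domain and $\psi_1$ is left-multiplication-composed-with-$(E+ab,-P)$ into a free module; concretely, $Q\mapsto Q\cdot(E+ab,-P)$ is injective since $\A$ has no zero divisors and $(E+ab,-P)\neq(0,0)$. The crux is exactness in the middle, at $\cD^2$: I must show that any pair $(Q_1,Q_2)\in\A^2$ with $Q_1P+Q_2E=0$ lies in the image of $\psi_1$, i.e. equals $Q(E+ab,-P)$ for some $Q\in\A$. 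This is the first-syzygy computation for the pair $(P,E)$, and it is the main obstacle.

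To handle the syzygy, my plan is to pass to a suitable graded or filtered picture where the computation becomes an honest commutative one. I would use the weight $(-\omega,\omega)$ (the $\inww$ notation introduced in the preamble, with $\omega$ the weight giving $\partial_1\mapsto b$, $\partial_2\mapsto a$) to take initial forms. Under a weight filtration adapted to $P$ and $E$, the symbols of $P$ and $E$ generate an ideal whose syzygies are controlled, and one checks that the only relation among the initial symbols is the Koszul-type relation coming from $PE=(E+ab)P$; because this relation already lifts to $\A$, the filtered module has no extra syzygies and the resolution is exact. Equivalently, I would argue that $M_A(\beta)$ is holonomic (already noted in the excerpt, its characteristic variety being $T^*_XX\cup T^*_YX$), hence has homological dimension at most $2$ over $\A$, and that the Euler characteristic / multiplicity count forces the minimal free resolution to have exactly the Betti numbers $1,2,1$; combined with the explicit complex being a complex, a length and rank comparison then yields exactness. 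I expect the genuinely delicate point to be ruling out any second syzygy, i.e. confirming that the kernel of $\psi_1$ is zero and that no further relation survives; here the domain property of $\A$ and the fact that $(E+ab,-P)$ is a single nonzero row give injectivity cleanly, so the remaining work is the middle-term syzygy, which the weight-filtration argument resolves.
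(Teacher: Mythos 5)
Your proposal is correct in its essentials, and there is in fact nothing in the paper to compare it against: the authors state this lemma without proof, treating it as a routine syzygy check. Your verification of the key identity $PE=(E+ab)P$ via $[E,\partial_1^b]=-ab\,\partial_1^b$ and $[E,\partial_2^a]=-ab\,\partial_2^a$ is exactly what is needed to see that (\ref{resolucionD}) is a complex; injectivity of $\psi_1$ from the domain property of $A_2$ (and of each stalk $\cD_{X,p}$, which is filtered with integral associated graded) is fine, as is exactness at the augmentation, which holds by the very definition of $H_A(\beta)$ and $\pi$.

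Two caveats on the middle-exactness step, which you rightly identify as the crux. First, your weight bookkeeping is off: for a weight of the form $(-\omega,\omega)$ the associated graded ring of $A_2$ is again a Weyl algebra, not a commutative ring, so the computation does not become ``honest commutative''; moreover, with your stated $\omega$ (assigning $\partial_1\mapsto b$, $\partial_2\mapsto a$) the operator $P$ is not homogeneous ($\partial_1^b$ has weight $b^2$ while $\partial_2^a$ has weight $a^2$), and with the choice $\omega=(a,b)$ both $P$ and $E$ are $(-\omega,\omega)$-homogeneous, so taking initial forms is vacuous and nothing is gained. The standard fix is to use the order filtration instead: there $\gr \cD_{X,p}\cong\cO_{X,p}[\xi_1,\xi_2]$ is commutative, the symbols $\sigma(P)=\xi_1^b$ (recall $b>a$) and $\sigma(E)=ax_1\xi_1+bx_2\xi_2$ form a regular sequence (the zero set $\xi_1=x_2\xi_2=0$ has codimension $2$ in a Cohen--Macaulay ring), hence all syzygies of the symbols are Koszul; given $Q_1P+Q_2E=0$, the top-order parts yield a syzygy of the symbols, one subtracts $Q\cdot(E+ab,-P)$ for a suitable lift $Q$ (note $(E+ab,-P)$ has symbols $(\sigma(E),-\sigma(P))$ in exactly the right degrees), and descending induction on the filtration degree concludes. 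Second, your alternative argument via holonomicity and ``minimal Betti numbers'' is not rigorous: over the Weyl algebra there is no theory of minimal free resolutions, and an Euler-characteristic count of ranks cannot rule out nonzero middle homology of a complex with ranks $1,2,1$. So the filtered argument, with the corrected filtration, should be kept as the proof, and the Betti-number alternative discarded.
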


\begin{remark}\label{RHomF}
For any left $\cD_X$--module $\cF$ the solution complex $\RR \cH
om_{\cD_X}(\cM_A(\beta),\cF)$ is represented by $$ 0\longrightarrow
\cF \stackrel{\psi_0^*}{\longrightarrow} \cF\oplus \cF
\stackrel{\psi_1^*}{\longrightarrow} \cF \longrightarrow 0$$ where
$\psi_0^*(f) = (P(f),E(f))$ and $\psi_1^*(f_1,f_2) =
(E+ab)(f_1)-P(f_2)$ for $f,f_1,f_2$ local sections in $\cF$.
\end{remark}

\section{Description of
$\Irr_Y(\cM_A(\beta))_{(0,0)}$}\label{irr-at-zero}


\begin{theorem}\label{nulidad_en_origen}
With the previous notations we have
$\Irr_Y^{(s)}(\cM_A(\beta))_{(0,0)}=0$ for all $\beta\in \CC$ and
$1\leq s \leq \infty$. In other words
$$\mathcal{E}xt^i (\mathcal{M}_A (\beta) , \cQ_{Y} (s) )_{(0,0)}=0$$
for all $\beta\in \C$, $1\leq  s \leq \infty$ and  $i\in \N$. Here
the $\cE xt$ groups are taken over the sheaf of rings $\cD_X$.
\end{theorem}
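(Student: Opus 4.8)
The plan is to apply Remark~\ref{RHomF} to the coefficient sheaf $\cF=\cQ_Y(s)$ and to prove that the induced three-term complex of stalks at the origin
$$0\longrightarrow (\cQ_Y(s))_{(0,0)} \xrightarrow{\;\psi_0^*\;} (\cQ_Y(s))_{(0,0)}^2 \xrightarrow{\;\psi_1^*\;} (\cQ_Y(s))_{(0,0)} \longrightarrow 0$$
is exact; since $\psi_0^*(f)=(P(f),E(f))$ and $\psi_1^*(f_1,f_2)=(E+ab)(f_1)-P(f_2)$, this yields the vanishing of $\cE xt^i(\cM_A(\beta),\cQ_Y(s))_{(0,0)}$ for $i=0,1,2$, which is exactly the statement. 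I would represent a class of $(\cQ_Y(s))_{(0,0)}$ by a Gevrey series $f=\sum_{i,j\ge 0}c_{ij}x_1^jx_2^i$ taken modulo $\cO_{X|Y,(0,0)}$, recalling from Section~\ref{Gevrey-series} that $f$ has Gevrey order $s$ precisely when $|c_{ij}|\le C R^{i+j}(i!)^{s-1}$ for suitable $C,R>0$, and that $f$ is convergent when the factor $(i!)^{s-1}$ can be dropped.

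The computational heart is the behaviour of the Euler-type operators $E_A(\gamma)=ax_1\partial_1+bx_2\partial_2-\gamma$ (for fixed $\gamma\in\CC$) on these series. Such an operator is diagonal on monomials, $E_A(\gamma)(x_1^jx_2^i)=(aj+bi-\gamma)x_1^jx_2^i$, and since $a,b$ are positive integers the eigenvalues $aj+bi$ run over a discrete subset of $\ZZ_{\ge 0}$ tending to $+\infty$; hence there is $\delta>0$ with $|aj+bi-\gamma|\ge\delta$ for all but finitely many pairs $(i,j)$, and those finitely many resonant monomials span a space of polynomials contained in $\cO_{X|Y,(0,0)}$. From this I would deduce two facts. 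First, $E_A(\gamma)$ is injective on $(\cQ_Y(s))_{(0,0)}$: if $E_A(\gamma)(f)$ is convergent then $|c_{ij}|\le\delta^{-1}|(aj+bi-\gamma)c_{ij}|$ for all but finitely many indices forces the geometric bound making $f$ itself convergent. Second, $E_A(\gamma)$ is surjective modulo $\cO_{X|Y}$: dividing the coefficients of a given $g$ by the nonzero eigenvalues produces a series of the same Gevrey order $s$ (the factor $\delta^{-1}$ preserves the bound) mapping to $g$ up to the resonant polynomial part.

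With these two facts the three cohomology groups are handled in turn. For $\cE xt^0=\ker\psi_0^*$: if $(P(f),E(f))\equiv 0$ then in particular $E_A(\beta)(f)$ is convergent, so $f$ is convergent and the class vanishes. For $\cE xt^2=\coker\psi_1^*$: taking $f_2=0$ and using that $E+ab=E_A(\beta-ab)$ is surjective modulo $\cO_{X|Y}$ shows $\psi_1^*$ is onto. For the middle term $\cE xt^1$, given $(f_1,f_2)$ with $(E+ab)(f_1)\equiv P(f_2)$, I would first choose $h$ of Gevrey order $s$ with $E_A(\beta)(h)\equiv f_2$, then invoke the commutation relation $PE=(E+ab)P$ --- which follows from the identity $[P,E]=abP$ for $P=\partial_1^b-\partial_2^a$ --- to obtain $(E+ab)(P(h))=P(E(h))\equiv P(f_2)\equiv(E+ab)(f_1)$, so that $(E+ab)(f_1-P(h))$ is convergent; injectivity of $E_A(\beta-ab)$ then forces $f_1\equiv P(h)$, whence $(\bar f_1,\bar f_2)=\psi_0^*(\bar h)$ lies in the image.

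The main obstacle I anticipate is purely analytic: verifying that division of the coefficients $c_{ij}$ by the eigenvalues $aj+bi-\gamma$ genuinely preserves the Gevrey order $s$ bound, and that it upgrades a convergent series to a convergent one in the injectivity step. This is precisely where the hypotheses that $a,b$ are positive integers --- so that $aj+bi\to+\infty$ and stays uniformly away from the fixed value $\gamma$ --- enter decisively; once the uniform lower bound $\delta$ is secured the estimates are routine, and the only algebraic input is the single identity $[P,E]=abP$ already encoded in the resolution~\eqref{resolucionD}.
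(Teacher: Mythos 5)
Your proposal is correct and follows essentially the same route as the paper: the same three-term complex from Remark~\ref{RHomF} with $\cF=\cQ_Y(s)$, the key fact that the Euler operators $E_A(\gamma)$ act diagonally on monomials and hence induce automorphisms of $\cQ_Y(s)_{(0,0)}$ (this is the paper's Lemma~\ref{lemaE} and Corollary~\ref{coroE}, where the resonant monomials span the polynomial space $W_A$ absorbed into $\cO_{X|Y}$), and the commutation relation $PE=(E+ab)P$ combined with injectivity of $E_A(\beta-ab)$ to settle the degree-one case. The only cosmetic difference is that you verify the eigenvalue-division estimates directly on the Gevrey-$s$ coefficient bounds with a uniform gap $\delta$, whereas the paper reduces the intermediate orders $1<s<\infty$ to the cases $s=1$ and $s=\infty$ via the commutation $\rho_s E_A(\beta)=E_A(\beta)\rho_s$.
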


Previous result is related to \cite[Th. 1]{oaku-on-regular-2007}.

Let us denote by $V_A(\beta,s)$ and $W_A(\beta,s)$ the vector spaces
{\footnotesize {$$\left\{\sum_{\alpha \in \NN^2 } a_{\alpha }
x^{\alpha} \in \cO_{\widehat{X|Y}} (s)_{(0,0)}: \, a_{\alpha }= 0
{\mbox { if }} A\alpha =\beta \right\}, \quad \left\{\sum_{\alpha
\in \NN^2 } a_{\alpha } x^{\alpha} \in \cO_{\widehat{X|Y}}
(s)_{(0,0)}: \, a_{\alpha }= 0 {\mbox { if }} A\alpha \not=\beta
\right\}$$}} respectively. Notice that
$V_A(\beta,s)=\cO_{\widehat{X|Y}} (s)_{(0,0)}$ if and only if $\beta
\not\in a\NN + b\NN$.

\begin{lemma}\label{lemaE}
i) The $\CC$--linear map $E_A(\beta): V_A(\beta,s) \rightarrow
V_A(\beta,s)$ is an automorphism  for all $1\leq s\leq \infty$ and
$\beta \in \CC$. In particular, if $\beta \notin a\NN +b\NN$ then
$E_A(\beta)$ is an automorphism of $\cO_{\widehat{X|Y}} (s)_{(0,0)}$
for all $1\leq s\leq \infty$. \\ ii) The $\CC$--linear map $P:
W_A(\beta,s) \rightarrow W_A(\beta-ab,s)$ is surjective  for all
$1\leq s\leq \infty$ and $\beta \in \CC$.
\end{lemma}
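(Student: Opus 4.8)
The plan is to analyze the two operators separately using the bigraded structure induced by the operator $E_A(\beta)$. The key observation is that $E_A(\beta)$ acts diagonally on the monomial basis: on a monomial $x^\alpha$ we have $E_A(\beta)(x^\alpha) = (A\alpha - \beta)x^\alpha$. This means the spaces $V_A(\beta,s)$ and $W_A(\beta,s)$ are precisely the ``off-resonance'' and ``resonance'' parts of $\cO_{\widehat{X|Y}}(s)_{(0,0)}$ with respect to the eigenvalue decomposition of $E_A(\beta)$, and that $\cO_{\widehat{X|Y}}(s)_{(0,0)} = V_A(\beta,s)\oplus W_A(\beta,s)$.

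For part (i), I would argue that $E_A(\beta)$ is diagonalizable with eigenvalue $A\alpha-\beta$ on each monomial $x^\alpha$. On $V_A(\beta,s)$, by definition, only those $\alpha$ with $A\alpha\neq\beta$ occur, so every eigenvalue is nonzero. The inverse is then the formal diagonal operator multiplying the coefficient of $x^\alpha$ by $(A\alpha-\beta)^{-1}$. The only nontrivial point is to check that this inverse preserves the Gevrey order $s$: since $|A\alpha - \beta| \to \infty$ as $|\alpha|\to\infty$, the eigenvalues are bounded below away from $0$, so dividing the coefficients does not destroy convergence of $\rho_s$. Hence $E_A(\beta)$ restricts to a bijection on $V_A(\beta,s)$. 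The stated special case when $\beta\notin a\NN+b\NN$ is immediate because then $A\alpha=\beta$ has no solution $\alpha\in\NN^2$, forcing $V_A(\beta,s)=\cO_{\widehat{X|Y}}(s)_{(0,0)}$.

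For part (ii), I would use that $P = \partial_1^b - \partial_2^a$ lowers the $A$-weight by exactly $ab$: indeed $A\cdot(be_1) = ab = A\cdot(ae_2)$, so $P$ maps the $A\alpha=\beta$ graded piece into the $A\alpha=\beta-ab$ graded piece, which justifies the target space $W_A(\beta-ab,s)$. Surjectivity is the real content. Given a target series $\sum b_\alpha x^\alpha \in W_A(\beta-ab,s)$, I would construct a preimage by solving the recurrence coming from $P$ on each ``fiber'' of monomials sharing the same $A$-weight $\beta$; along such a fiber the exponents form an arithmetic progression in the direction $(b,-a)$ (equivalently $L_A=\ker_\ZZ(A)$), so $P$ acts as a shift-plus-difference operator whose invertibility amounts to a triangular system once one fixes a starting monomial.

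The main obstacle I anticipate is the Gevrey growth control in part (ii): constructing a formal preimage is a straightforward term-by-term inversion, but one must verify that the resulting coefficients still satisfy the order-$s$ Gevrey estimate so that the preimage genuinely lies in $W_A(\beta,s)$ rather than merely in the full formal completion. This will require tracking how the Pochhammer-type factors produced by inverting $P$ interact with the factorial weights $i!^{s-1}$ in the definition of $\rho_s$; the ratio estimates are analogous in spirit to the d'Alembert computation already used in Section \ref{sol_generic_point}, where $\lim_{m\to\infty}(am)^a/(bm)^b=0$ guarantees convergence, and I expect the same dominance of $b$ over $a$ to yield the needed bound here.
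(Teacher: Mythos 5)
Your part (i) is correct and is essentially the paper's own argument: both rest on the diagonal action $E_A(\beta)(x^\alpha)=(A\alpha-\beta)x^\alpha$ on monomials. The paper handles the Gevrey order by the commutation $\rho_s E_A(\beta)=E_A(\beta)\rho_s$, reducing to the cases $s=1$ and $s=\infty$, whereas you invert the eigenvalues directly; your route works because $a\NN+b\NN$ is a discrete set and $A\alpha\to\infty$ as $|\alpha|\to\infty$, so on the support of $V_A(\beta,s)$ the nonzero quantities $|A\alpha-\beta|$ have a positive infimum and division by them cannot spoil convergence of $\rho_s$.

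In part (ii), however, the ``main obstacle'' you anticipate does not exist, and missing this is a genuine conceptual slip --- it is exactly why the paper dispatches (ii) with the single word ``obvious.'' Since $a,b>0$, the set $\{\alpha\in\NN^2 : A\alpha=\beta\}$ is \emph{finite} (the line $a\alpha_1+b\alpha_2=\beta$ meets $\NN^2$ in finitely many points; equivalently, your $(b,-a)$-progression leaves $\NN^2$ after finitely many steps because its second coordinate drops by $a$ each time). Hence $W_A(\beta,s)$ is a finite-dimensional space of \emph{polynomials}, independent of $s$: there is no infinite recurrence to solve, no Pochhammer growth to track, and no order-$s$ estimate to verify --- the analogue of the $(am)^a/(bm)^b$ computation you invoke is vacuous here. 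What remains is finite linear algebra, and your triangularity idea closes it in one line: for $\gamma\in\NN^2$ with $A\gamma=\beta-ab$ one has
$$P\bigl(x_1^{\gamma_1+b}x_2^{\gamma_2}\bigr)=(\gamma_1+b)_b\, x_1^{\gamma_1}x_2^{\gamma_2}-(\gamma_2)_a\, x_1^{\gamma_1+b}x_2^{\gamma_2-a},$$
where $(\gamma_1+b)_b\neq 0$, and the second term either vanishes (when $\gamma_2<a$) or is a weight-$(\beta-ab)$ monomial with strictly smaller $x_2$-exponent, so induction on $\gamma_2$ gives surjectivity. As written, your proof of (ii) is left unfinished pending an estimate that evaporates once the finiteness of $W_A(\beta,s)$ is observed; with that observation inserted, your plan is complete and agrees with the paper.
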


\begin{proof} Part {\em ii)} is obvious. Let's prove part {\em i)}.  We have
$E_A(\beta)=ax_1\partial_1+bx_2\partial_2-\beta$ then for
$f=\sum_{\alpha\in \NN^2} f_\alpha x^\alpha \in \CC[[x_1,x_2]]$ we
have $$E_A(\beta)(f)= \sum_{\alpha\in \NN^2} f_\alpha (A\alpha
-\beta)x^\alpha.$$ This implies that $E_A(\beta)$ is an automorphism
of $V_A(\beta ,\infty)$. It is also clear that $E_A(\beta)$ is an
automorphism of $V_A(\beta,1)$. For any $1 < s < \infty$ we have
$\rho_s E_A(\beta)= E_A(\beta)\rho_s$ and then $E_A(\beta)$ is an
automorphism of $V_A(\beta, s)$ (see Section \ref{Gevrey-series} for
the definition  of $\rho_s$). \end{proof}

\begin{corollary}\label{coroE}  $E_A(\beta)$ is an  automorphism of the  vector space $\cQ_Y
(s)_{(0,0)}$ for $1\leq s\leq \infty$ and $\beta \in \CC$.
\end{corollary}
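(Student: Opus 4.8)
The plan is to apply the endomorphism $E_A(\beta)$ to the defining short exact sequence (\ref{exact-sequence-gevrey-s}) at the stalk over the origin and run the snake lemma. Since $E_A(\beta)$ is a $\CC$--linear endomorphism of each of $\cO_{X|Y,(0,0)}$, $\cO_{\widehat{X|Y}}(s)_{(0,0)}$ and $\cQ_Y(s)_{(0,0)}$ commuting with the maps in (\ref{exact-sequence-gevrey-s}), we obtain a commutative diagram with exact rows
\begin{CD}
0 @>>> \cO_{X|Y,(0,0)} @>>> \cO_{\widehat{X|Y}}(s)_{(0,0)} @>>> \cQ_Y(s)_{(0,0)} @>>> 0\\
@. @VV{E_A(\beta)}V @VV{E_A(\beta)}V @VV{E_A(\beta)}V @.\\
0 @>>> \cO_{X|Y,(0,0)} @>>> \cO_{\widehat{X|Y}}(s)_{(0,0)} @>>> \cQ_Y(s)_{(0,0)} @>>> 0
\end{CD}
Write $a$, $b$, $c$ for the left, middle and right vertical arrows. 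It suffices to prove that the induced maps $\ker a\to \ker b$ and $\coker a\to \coker b$ are isomorphisms, for then the snake sequence forces both $\ker c$ and $\coker c$ to vanish, which is exactly the assertion that $E_A(\beta)$ is an automorphism of $\cQ_Y(s)_{(0,0)}$.

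The key computation concerns $b$. Recall the direct sum decomposition $\cO_{\widehat{X|Y}}(s)_{(0,0)}=V_A(\beta,s)\oplus W_A(\beta,s)$ according to the support condition $A\alpha=\beta$, together with the identity $E_A(\beta)(x^\alpha)=(A\alpha-\beta)x^\alpha$ obtained in the proof of Lemma \ref{lemaE}. Hence $E_A(\beta)$ annihilates $W_A(\beta,s)$ and, by Lemma \ref{lemaE} i), restricts to an automorphism of $V_A(\beta,s)$; therefore $\ker b=\coker b=W_A(\beta,s)$. Because $0<a<b$, the set $\{\alpha\in\NN^2 : A\alpha=\beta\}$ is finite, so $W_A(\beta,s)$ is a finite-dimensional space of polynomials that does not depend on $s$; in particular $W_A(\beta,s)=W_A(\beta,1)\subseteq \cO_{X|Y,(0,0)}$. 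Running the same argument with $s=1$ shows that $\ker a=\coker a=W_A(\beta,1)$ as well.

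It then remains to check that the inclusion $\cO_{X|Y,(0,0)}\hookrightarrow\cO_{\widehat{X|Y}}(s)_{(0,0)}$ induces isomorphisms on these kernels and cokernels. On kernels this is immediate: both equal $W_A(\beta,1)$ and the inclusion is the identity there. On cokernels, the image of $E_A(\beta)$ is $V_A(\beta,1)$ in the bottom row and $V_A(\beta,s)$ in the middle row, so both cokernels are canonically identified with $W_A(\beta,1)$ via the projection onto the $W$--summand, and since the inclusion sends $V_A(\beta,1)\subseteq V_A(\beta,s)$ and is the identity on $W_A(\beta,1)$, it induces the identity on the quotients. Feeding these two isomorphisms into the snake sequence $0\to \ker a\to \ker b\to \ker c\to \coker a\to \coker b\to \coker c\to 0$ yields $\ker c=\coker c=0$.

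The only delicate point is the assertion that $\ker E_A(\beta)$ and $\coker E_A(\beta)$ coincide on the convergent and on the Gevrey level; this rests on $W_A(\beta,s)$ being a finite-dimensional space of polynomials independent of $s$ (guaranteed by $0<a<b$, hence already convergent) and on $E_A(\beta)$ being an automorphism of $V_A(\beta,s)$ for every $s$, which is precisely Lemma \ref{lemaE} i). One may also bypass the snake lemma and argue directly on $\cQ_Y(s)_{(0,0)}$: given $g=v_g+w_g$, the element $v$ with $E_A(\beta)(v)=v_g$ satisfies $E_A(\beta)(v)-g=-w_g\in\cO_{X|Y,(0,0)}$, giving surjectivity; and if $E_A(\beta)(f)\in\cO_{X|Y,(0,0)}$, writing $f=v+w$ shows $E_A(\beta)(v)$ is convergent, whence $v\in V_A(\beta,1)$ by Lemma \ref{lemaE} i) at $s=1$ together with injectivity at level $s$, so that $f=v+w\in\cO_{X|Y,(0,0)}$ and injectivity follows.
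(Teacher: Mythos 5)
Your proposal is correct and, despite the snake-lemma packaging, takes essentially the same route as the paper, which leaves Corollary \ref{coroE} proofless as an immediate consequence of Lemma \ref{lemaE} i) via the decomposition $\cO_{\widehat{X|Y}}(s)_{(0,0)}=V_A(\beta,s)\oplus W_A(\beta,s)$ and the observation that $W_A(\beta,s)$ is a finite-dimensional space of polynomials independent of $s$ (hence contained in $\cO_{X|Y,(0,0)}$). Your closing direct argument on $\cQ_Y(s)_{(0,0)}$ is precisely the intended derivation, so there is no gap.
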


\begin{proof} {\rm {\bf [Theorem \ref{nulidad_en_origen}]}} Let us simply write $E:=E_A(\beta)$.
The complex $\Irr_Y^{(s)}(\cM_A(\beta))_{(0,0)}$ is represented by
the germ at $(0,0)$ of the following complex $$ 0\longrightarrow
\cQ_Y(s) \stackrel{\psi_0^*}{\longrightarrow} \cQ_Y(s)\oplus
\cQ_Y(s) \stackrel{\psi_1^*}{\longrightarrow} \cQ_Y(s)
\longrightarrow 0$$ where $\psi_0^*(f) = (P(f),E(f))$ and
$\psi_1^*(f_1,f_2) = (E+ab)(f_1)-P(f_2)$ for $f,f_1,f_2$ germs  in
$\cQ_Y(s)$ (see Remark \ref{RHomF}). In particular, we only need to
prove the statement for $i=0,1,2$.

For $i=0,2$ the statement follows from Corollary \ref{coroE}
Let us see the case
$i=1$. Let us consider $(\overline{f},\overline{g})\in
\operatorname{Ker} (\psi^{\ast}_1)_{(0,0)}$ (i.e.
$(E+ab)(\overline{f})=P(\overline{g})$). We want to prove that there
exists $\overline{h}\in \cQ_{Y}(s)_{(0,0)}$ such that
$P(\overline{h})=\overline{f}$ and  $E (\overline{h})=\overline{g}$,
where $(\overline{\mbox{\phantom{x}}})$ means modulo
$\cO_{X|Y,(0,0)}=\CC\{x\}$.

From Corollary \ref{coroE} we have that  there exists a unique
$\overline{h}\in \cQ_Y (s)_{(0,0)}$ such that  $E(\overline{h})=
\overline{g}$. Since $P E=(E+ab)P$ and
$(E+ab)(\overline{f})=P(\overline{g})$ we have:
$$(E+ab)(\overline{f})=P(\overline{g})=P (E(\overline{h}))=(E+ab)(P(\overline{h})).$$
Since for all $\beta \in \C$, $E+ab=E_A(\beta-ab)$ is an
automorphism of $\cQ_Y (s)_{(0,0)}$ (see Corollary \ref{coroE}) we
also have $\overline{f}=\overline{P(h)}$. So
$(\overline{f},\overline{g})= (P(\overline{h}), E (\overline{h}) )
\in \operatorname{Im}(\psi_0^{\ast})_{(0,0)}$.
\end{proof}

\begin{remark} From Theorem \ref{nulidad_en_origen} and the long exact
sequence of cohomology associated with the exact sequence
(\ref{exact-sequence-gevrey-s}) we have
$$\cE xt^i(\cM_A(\beta), \cO_{X\vert Y})_{(0,0)} \simeq \cE
xt^i(\cM_A(\beta), \cO_{\widehat{X\vert Y}}(s))_{(0,0)}$$ for $1
\leq s \leq \infty$, $i\in \NN$ and $\beta \in \C$. In fact we have
the following two propositions.
\end{remark}

\begin{proposition} With the previous notations we have $\cE
xt^i(\cM_A(\beta), \cO_{\widehat{X\vert Y}}(s))_{(0,0)} =0$ for all
$\beta\not\in a\NN + b\NN$,  $1\leq s \leq \infty$ and $i\in \NN$.
\end{proposition}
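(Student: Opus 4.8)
The plan is to compute the cohomology of the solution complex directly, exploiting the hypothesis $\beta \notin a\NN + b\NN$ to reduce everything to the automorphism statements already available. The key observation is that when $\beta \notin a\NN + b\NN$, there is no $\alpha \in \NN^2$ with $A\alpha = \beta$, so the condition defining $V_A(\beta,s)$ is vacuous and $V_A(\beta,s) = \cO_{\widehat{X\vert Y}}(s)_{(0,0)}$, as already noted after the statement of Theorem \ref{nulidad_en_origen}. By Lemma \ref{lemaE}(i), this means $E_A(\beta)$ is an automorphism of $\cO_{\widehat{X\vert Y}}(s)_{(0,0)}$, and since $\beta - ab$ is also not in $a\NN + b\NN$ (indeed $a\NN + b\NN \subseteq \NN$ while $ab > 0$ forces nothing problematic here, but the key point is that the defining condition $A\alpha = \beta - ab$ likewise has no solution in $\NN^2$), the operator $E_A(\beta) + ab = E_A(\beta - ab)$ is also an automorphism of the same space.

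First I would set $\cF := \cO_{\widehat{X\vert Y}}(s)_{(0,0)}$ and invoke Remark \ref{RHomF}, which represents $\RR\cH om_{\cD_X}(\cM_A(\beta),\cF)$ by the three-term complex with $\psi_0^*(f) = (P(f),E(f))$ and $\psi_1^*(f_1,f_2) = (E+ab)(f_1) - P(f_2)$, so that it suffices to show the cohomology vanishes in degrees $i = 0,1,2$. For $i = 0$ we must show $\psi_0^*$ is injective: if $(P(f),E(f)) = (0,0)$ then $E(f) = 0$, and since $E$ is an automorphism of $\cF$ we get $f = 0$. For $i = 2$ we must show $\psi_1^*$ is surjective: given $g \in \cF$, the element $((E+ab)^{-1}(g), 0)$ maps to $g$ since $E + ab$ is an automorphism. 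These two cases mirror exactly the argument structure of the proof of Theorem \ref{nulidad_en_origen}, using the automorphism property of $E$ and $E + ab$ on the honest (non-quotient) space $\cF$ rather than on $\cQ_Y(s)$.

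The middle term $i = 1$ is the main obstacle, though it is entirely analogous to the $i=1$ case already carried out for Theorem \ref{nulidad_en_origen} with $\cQ_Y(s)$ replaced by $\cF$. I would take $(f,g) \in \ker \psi_1^*$, i.e. $(E+ab)(f) = P(g)$, and seek $h \in \cF$ with $P(h) = f$ and $E(h) = g$. Using that $E$ is an automorphism of $\cF$, define $h := E^{-1}(g)$, so $E(h) = g$ automatically. Then the commutation relation $PE = (E+ab)P$ (equivalently $P E_A(\beta) = E_A(\beta - ab) P$), together with $(E+ab)(f) = P(g) = P(E(h)) = (E+ab)(P(h))$, gives $(E+ab)(f) = (E+ab)(P(h))$; since $E + ab$ is an automorphism of $\cF$ we cancel it to obtain $f = P(h)$. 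Hence $(f,g) = (P(h),E(h)) = \psi_0^*(h) \in \operatorname{Im}\psi_0^*$, establishing exactness at the middle.

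I expect no genuine difficulty beyond verifying the bookkeeping that $\beta - ab \notin a\NN + b\NN$, so that Lemma \ref{lemaE}(i) applies to both $E$ and $E+ab$ on the full space $\cF$; this is where one must be slightly careful, but it follows since the defining condition $A\alpha = \beta - ab$ is vacuous exactly as for $\beta$. The whole proof is thus a transcription of the argument for Theorem \ref{nulidad_en_origen} onto $\cF = \cO_{\widehat{X\vert Y}}(s)_{(0,0)}$, where the vanishing hypothesis on $\beta$ upgrades the automorphisms from $\cQ_Y(s)$ to the non-quotient space itself, and the commutation identity $PE = (E+ab)P$ does the essential work in degree one.
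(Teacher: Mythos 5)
Your proof is correct and takes essentially the same route as the paper, whose own proof just says that the argument of Theorem \ref{nulidad_en_origen} goes through with $\cQ_Y(s)$ replaced by $\cO_{\widehat{X\vert Y}}(s)_{(0,0)}$, since the hypothesis $\beta\notin a\NN+b\NN$ makes $E_A(\beta)$ an automorphism of that space (Lemma \ref{lemaE}); your degree-by-degree verification is exactly that transcription. You also make explicit the one point the paper leaves implicit, namely that $\beta-ab\notin a\NN+b\NN$ so that $E+ab=E_A(\beta-ab)$ is likewise an automorphism; for completeness the one-line reason is that a representation $\beta-ab=am+bn$ with $m,n\in\NN$ would give $\beta=a(m+b)+bn\in a\NN+b\NN$.
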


\begin{proof} The proof is similar to the one of Theorem
\ref{nulidad_en_origen} because $E=E_A(\beta)$ is an automorphism of
$\cO_{\widehat{X\vert Y}}(s))_{(0,0)}$.
\end{proof}

\begin{proposition}\label{ext1-origin} With the previous notations we have $$\dim_{\C}(\cE
xt^i(\cM_A(\beta), \cO_{\widehat{X\vert Y}}(s))_{(0,0)})
=\left\{\begin{array}{lcl}
                                                             1 & \mbox{ if } & i=0,1 \\
                                                             0 & \mbox{ if }  & i\geq 2
                                                           \end{array}\right.$$ for
all $\beta \in a\NN + b\NN$ and  $1\leq s \leq \infty$. Moreover,
$\cE xt^i(\cM_A(\beta), \cO_{\widehat{X\vert Y}}(s))_{(0,0)}$ is
generated by a polynomial $\phi_{v^q }$ when $i=0$ and by the class
of $(0,\phi_{v^q })$ when $i=1$ (see the proof of Lemma
\ref{gevrey_index_lemma} for the definition of $\phi_{v^q}$).
\end{proposition}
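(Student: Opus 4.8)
The plan is to compute the cohomology of the complex representing $\RR\cH om_{\cD_X}(\cM_A(\beta),\cF)$ given in Remark \ref{RHomF},
$$0\longrightarrow \cF \stackrel{\psi_0^*}{\longrightarrow} \cF\oplus\cF \stackrel{\psi_1^*}{\longrightarrow} \cF \longrightarrow 0,\qquad \psi_0^*(f)=(P(f),E(f)),\quad \psi_1^*(f_1,f_2)=(E+ab)(f_1)-P(f_2),$$
with $\cF:=\cO_{\widehat{X\vert Y}}(s)_{(0,0)}$ and $E=E_A(\beta)$. (By the Remark preceding the statement one could instead take $\cF=\cO_{X\vert Y,(0,0)}$; the argument below uses only Lemma \ref{lemaE}, which is valid for every $s$.) The structural device I would use throughout is the grading of $\cF$ by $A$-degree: every $f\in\cF$ is $f=\sum_c f^{(c)}$ with $f^{(c)}\in\cF_c:=W_A(c,s)$, the finite-dimensional space of polynomials supported on $\{\alpha\in\NN^2:A\alpha=c\}$. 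On $\cF_c$ the operator $E$ acts as the scalar $c-\beta$ and $E+ab=E_A(\beta-ab)$ as $c-\beta+ab$, while $P$ maps $\cF_c$ into $\cF_{c-ab}$.

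The cases $i=0$ and $i\geq 2$ should be short. Since $E(f)=\sum_\alpha f_\alpha(A\alpha-\beta)x^\alpha$, the kernel of $E$ on $\cF$ is exactly $\cF_\beta$, so $\cE xt^0=\ker(P)\cap\cF_\beta$. By Lemma \ref{lemaE}.ii) the map $P\colon\cF_\beta\to\cF_{\beta-ab}$ is surjective, and an elementary lattice-point count gives $\dim_{\C}\cF_\beta=\dim_{\C}\cF_{\beta-ab}+1$; hence $\ker(P|_{\cF_\beta})$ is one-dimensional, spanned by the nonzero polynomial solution $\phi_{v^q}\in\cF_\beta$, so $\cE xt^0=\C\phi_{v^q}$. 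For $\cE xt^2$ I would prove $\psi_1^*$ surjective: given $g\in\cF$, use Lemma \ref{lemaE}.ii) to pick $f_2\in\cF_\beta$ with $P(f_2)=-g^{(\beta-ab)}$, so that $g+P(f_2)$ has vanishing degree-$(\beta-ab)$ component and lies in $V_A(\beta-ab,s)$, whence $f_1:=(E+ab)^{-1}(g+P(f_2))$ is defined by Lemma \ref{lemaE}.i) and $\psi_1^*(f_1,f_2)=g$.

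The substance is $i=1$, which I would treat by describing $\ker\psi_1^*$ and $\operatorname{Im}\psi_0^*$ degree by degree. In degree $c$ the equation $(E+ab)(f_1)=P(f_2)$ reads $(c-\beta+ab)f_1^{(c)}=P(f_2^{(c+ab)})$; for $c\neq\beta-ab$ this determines $f_1^{(c)}$ from $f_2$ (using Lemma \ref{lemaE}.i) to reassemble a Gevrey series), and for $c=\beta-ab$ it imposes only $P(f_2^{(\beta)})=0$, i.e. $f_2^{(\beta)}\in\C\phi_{v^q}$, leaving $f_1^{(\beta-ab)}$ free. Thus $(f_1,f_2)\mapsto(f_2,f_1^{(\beta-ab)})$ identifies $\ker\psi_1^*$ with $(V_A(\beta,s)\oplus\C\phi_{v^q})\times\cF_{\beta-ab}$. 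On the other hand, writing $f=f^V+f^{(\beta)}$ with $f^V\in V_A(\beta,s)$, the same coordinates of $\psi_0^*(f)$ are $f_2=E(f^V)$ and $f_1^{(\beta-ab)}=P(f^{(\beta)})$, which by Lemma \ref{lemaE}.i)--ii) run independently through $V_A(\beta,s)\times\cF_{\beta-ab}$. Comparing the two descriptions, the quotient only records the degree-$\beta$ part of $f_2$, so $\cE xt^1\cong(V_A(\beta,s)\oplus\C\phi_{v^q})/V_A(\beta,s)=\C\phi_{v^q}$ is one-dimensional; the coordinates $(f_2,f_1^{(\beta-ab)})=(\phi_{v^q},0)$ reconstruct $f_1=0$ (since $P(\phi_{v^q})=0$), giving the representative $(0,\phi_{v^q})$ as claimed.

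The step I expect to be the main obstacle is the bookkeeping in the $i=1$ computation: organizing the infinitely many graded pieces so that both $\ker\psi_1^*$ and $\operatorname{Im}\psi_0^*$ become transparent, and isolating the single degree-$\beta$ discrepancy between $\ker(P|_{\cF_\beta})$ and the image of $E$ as the only surviving class. The facts that $E$ and $E+ab$ are automorphisms away from one degree (Lemma \ref{lemaE}.i)) and that $P$ is surjective in each relevant degree (Lemma \ref{lemaE}.ii)) are precisely what guarantee that the passage to the completed direct sum $\cF$ causes no convergence difficulty.
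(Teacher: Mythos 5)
Your proof is correct, and it runs on the same engine as the paper's own argument: the complex from Remark \ref{RHomF}, the splitting of a series into $A$-homogeneous parts, and Lemma \ref{lemaE} to invert $E$ (resp.\ $E+ab$) away from the critical degree $\beta$ (resp.\ $\beta-ab$) and to make $P$ surjective there; your grading by all $A$-degrees is a refinement of the paper's coarser splitting $V_A(\cdot,s)\oplus W_A(\cdot,s)$, but only the degrees $\beta$ and $\beta-ab$ ever intervene, so the two decompositions carry the same information. Two local differences are worth recording. First, where the paper identifies $\ker(P)\cap W_A(\beta,s)=\C\,\phi_{v^q}$ by citing Proposition \ref{dim_gevrey_a_beta_nongeneric} (legitimate because elements of $W_A(\beta,s)$ are polynomials, hence define germs at points $p\neq(0,0)$), you derive it entirely at the origin from the surjectivity of $P\colon W_A(\beta,s)\to W_A(\beta-ab,s)$ (Lemma \ref{lemaE}.ii) together with the count $\dim_\C W_A(\beta,s)=\dim_\C W_A(\beta-ab,s)+1$. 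This makes the proposition self-contained, but that count is the one load-bearing assertion you leave unproved; supply it: since $\gcd(a,b)=1$, the shift $(\alpha_1,\alpha_2)\mapsto(\alpha_1+b,\alpha_2)$ is a bijection from $\{\alpha\in\NN^2 : A\alpha=\beta-ab\}$ onto the solutions of $A\alpha=\beta$ with $\alpha_1\geq b$, and $\beta\in a\NN+b\NN$ guarantees exactly one solution with $0\leq\alpha_1<b$. Second, your coordinate description of $\operatorname{Ker}\psi_1^*$ and $\operatorname{Im}\psi_0^*$ inside $(V_A(\beta,s)\oplus\C\,\phi_{v^q})\times W_A(\beta-ab,s)$ yields the dimension of $\cE xt^1$ exactly: the paper's element-by-element reduction shows every class is a multiple of the class of $(0,\phi_{v^q})$ but leaves its nonvanishing implicit, whereas in your coordinates it is manifest, since $(\phi_{v^q},0)$ does not lie in $V_A(\beta,s)\times W_A(\beta-ab,s)$. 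Apart from these points the arguments coincide step for step (your $\cE xt^{\geq 2}$ computation is literally the paper's), and your handling of the Gevrey condition--reassembling $f_1$ via the automorphism $(E+ab)^{-1}$ of $V_A(\beta-ab,s)$ rather than term by term--is exactly the right way to avoid convergence issues.
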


\begin{proof} By Remark \ref{RHomF} it is enough to
consider $i=0,1,2$. Let's treat first the case $i=2$. Let $h\in
\cO_{\widehat{X\vert Y}}(s)_{(0,0)}$ and write $h=h_1+h_2$ with
$h_1\in V_A(\beta-ab,s)$ and $h_2\in W_A(\beta-ab,s)$. From Lemma
\ref{lemaE} there exist $f\in V_A(\beta-ab,s)$ and $g\in
W_A(\beta,s)$ such that $(E+ab)(f)=h_1$ and $P(g)=-h_2$. Then
$(E+ab)(f)-P(g)=h_1+h_2=h$.

Let's see now that the $\cE xt^0$ has dimension 1. Assume that $h\in
\cO_{\widehat{X\vert Y}}(s)_{(0,0)}$ satisfies $P(h)=E(h)=0$ and
let's write $h=h_1+h_2$ with $h_1\in V_A(\beta,s)$ and $h_2\in
W_A(\beta,s)$. We have $E(h_2)=0$ and then $E(h)=E(h_1)=0$ implies
$h_1=0$ because of Lemma \ref{lemaE}. Now, from $P(h)=P(h_2)=0$ we
get $h_2=\lambda \phi_{v^q}$ for some $\lambda \in \CC$ (see
Proposition \ref{dim_gevrey_a_beta_nongeneric}).

Finally, let's prove that the $\cE xt^1$ has dimension 1. Let's
consider $(f,g)\in \cO_{\widehat{X\vert Y}}(s)_{(0,0)}$ such that
$(E+ab)(f)=P(g)$. Let's write $f=f_1+f_2, g=g_1+g_2$ with $f_1\in
V_A(\beta-ab,s)$, $f_2\in W_A(\beta-ab,s)$, $g_1\in V_A(\beta,s)$
and $g_2\in W_A(\beta,s)$. As $(E+ab)(f_2)=0$ we have
$(E+ab)(f)=(E+ab)(f_1)=P(g_1)+P(g_2)$. This implies $P(g_2)=0$ since
$(E+ab)(f_1)$ and $P(g_1)$ belong to $V_A(\beta-ab,s)$. By Lemma
\ref{lemaE} there exists $h_1\in V_A(\beta,s)$ such that
$E(h_1)=g_1$. We also have $(E+ab)(f_1-P(h_1)) =
(E+ab)(f_1)-PE(h_1)=0$ and again by Lemma \ref{lemaE} we have
$(f_1,g_1)=(P(h_1),E(h_1))$.

By Lemma \ref{lemaE} there exists $h_2\in W_A(\beta,a)$ such that
$P(h_2)=f_2$.  So, $(f_2,g_2)-(P(h_2),E(h_2)) =(0,g_2)=\lambda
(0,\phi_{v^q})$ for some $\lambda \in \CC$ since $P(g_2)=0$  (see
Proposition \ref{dim_gevrey_a_beta_nongeneric}).
\end{proof}


\section{Description of $\Irr_Y(\cM_A(\beta))_p$  for  $p\in Y$,
$p\not=(0,0)$}\label{irr-at-p}

We will compute a basis of the vector space $\mathcal{E}xt^i
(\mathcal{M}_A (\beta ) , \cQ_Y (s))_{p}$ for $1\leq s\leq \infty$,
$i\in \N$, $p\in Y$, $p\not= (0,0)$.  In this section we are writing
$p=(\epsilon,0)\in Y$ with $\epsilon \in \CC^*$.

We are going to use  $\Gamma$--series following (\cite{GGZ},
\cite[Section 1]{GZK}) and in the way they are handled in
\cite[Section 3.4]{SST}.

We will consider the family $v^k=(\frac{\beta-kb}{a},k)\in \CC^2$
for $k=0,\ldots,a-1$. They satisfy $Av^k=\beta$ and the
corresponding $\Gamma$--series are
$$\phi_{v^k} = x^{v^k}\sum_{m\geq 0}{\Gamma[v^k; u(m)]} x_1^{-bm} x_2^{am} \in
x^{v^k} \C[[x_1^{-1} ,x_2 ]]$$ where $u(m)=(-bm,am)$ for $m\in \ZZ$.

Although  $\phi_{v^k}$ does not define in general  any holomorphic
germ at $(0,0)$ it is clear that it defines a germ $\phi_{v^k ,p}$
in $\cO_{\widehat{X|Y},p}$ for $k=0,1,\ldots ,a-1$. Let us write
$x_1=t_1+\epsilon$ and remind that $\epsilon\in \CC^*$. We have
$$\phi_{v^k , p} = (t_1 +\epsilon )^{\frac{\beta -b
k}{a}}x_2^k \sum_{m\geq 0}{\Gamma[v^k;u(m)]}(t_1 +\epsilon )^{-bm}
x_2^{am}.$$

\begin{lemma}\label{gevrey_index_lemma} \begin{enumerate} \item If $\beta \in a\NN +
b\NN$ then there exists a unique $0\leq q \leq a-1$ such that
$\phi_{v^q}$ is a polynomial. Moreover, the Gevrey index of
$\phi_{{v^k},p}\in \cO_{\widehat{X|Y},p}$ is $\frac{b}{a}$ for
$0\leq k \leq a-1$ and $k\not= q$. \item If $\beta \not\in a\NN +
b\NN$ then the Gevrey index of $\phi_{{v^k},p}\in
\cO_{\widehat{X|Y},p}$ is $\frac{b}{a}$ for $0\leq k \leq a-1$.
\end{enumerate}\end{lemma}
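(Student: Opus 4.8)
The plan is to analyze the $\Gamma$-series $\phi_{v^k,p}$ coefficient by coefficient and apply the Gevrey convergence criterion from Section~\ref{Gevrey-series}. First I would expand $\phi_{v^k,p}$ in powers of $x_2$ by grouping the series $\sum_{m\geq 0}\Gamma[v^k;u(m)](t_1+\epsilon)^{-bm}x_2^{am}$ so that the coefficient of $x_2^{am}$ is the holomorphic germ $f_{am}(t_1):=\Gamma[v^k;u(m)](t_1+\epsilon)^{(\beta-bk)/a - bm}$ (and the coefficients of $x_2^j$ vanish unless $j\equiv ak\pmod{a}$, i.e. unless $j$ is a multiple of $a$ shifted by $k$). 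Since $\epsilon\in\CC^*$, the factor $(t_1+\epsilon)^{(\beta-bk)/a-bm}$ is a convergent power series in $t_1$ near $0$, so each $f_{am}$ is genuinely a holomorphic germ at $p$, and $\phi_{v^k,p}\in\cO_{\widehat{X|Y},p}$ as claimed. The Gevrey index is then governed by the growth of the scalars $\Gamma[v^k;u(m)]$ as $m\to\infty$.

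Next I would determine precisely when $\phi_{v^q}$ is a polynomial. Writing out $\Gamma[v^k;u(m)]=(v^k)_{u(m)_-}/(v^k+u(m))_{u(m)_+}$ with $u(m)=(-bm,am)$, the vanishing of a Pochhammer symbol in the numerator truncates the series. Since the first coordinate of $v^k$ is $(\beta-kb)/a$, the numerator factor involves the product $\prod_{j=0}^{bm-1}\bigl(\tfrac{\beta-kb}{a}-j\bigr)$, which vanishes for some $m$ exactly when $(\beta-kb)/a\in\NN$, i.e. when $\beta=kb+a\cdot(\text{nonneg.\ integer})\in a\NN+b\NN$. Because $a,b$ are coprime and $0\le k\le a-1$, the residue of $\beta$ modulo $a$ is achieved by a unique such $k$; I would call this $q$ and check that for $\beta\in a\NN+b\NN$ this $q$ is the unique index for which the numerator factor vanishes, forcing $\phi_{v^q}$ to be a finite sum, hence a polynomial. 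For the other indices $k\neq q$ (and for all $k$ in case (2) when $\beta\notin a\NN+b\NN$) no numerator factor vanishes, so the series is genuinely infinite.

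For the infinite cases I would compute the Gevrey index directly from Definition~\ref{def-gevrey-index} using $\rho_s$. Applying $\rho_s$ to $\phi_{v^k,p}$ multiplies the coefficient of $x_2^{am}$ by $1/((am)!)^{s-1}$, so the relevant series is $\sum_m \frac{1}{((am)!)^{s-1}}\,\Gamma[v^k;u(m)]\,(t_1+\epsilon)^{(\beta-bk)/a-bm}\,x_2^{am}$. Using the asymptotics already established in Section~\ref{sol_generic_point}, $|\Gamma[v^k;u(m+1)]/\Gamma[v^k;u(m)]|\sim (bm)^b/(am)^a$ grows like $m^{b-a}$ up to a constant, so $\Gamma[v^k;u(m)]$ grows roughly like $((m!)^{b-a})$ times elementary factors (via Stirling $(cm)!\sim$ powers of $m!$). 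The threshold value of $s$ is the one balancing this factorial growth against $((am)!)^{s-1}\sim ((m!)^a)^{s-1}$: convergence of $\rho_s(\phi_{v^k,p})$ at $p$ holds precisely when $a(s-1)\geq b-a$, i.e. $s\geq b/a$, with the borderline $s=b/a$ still convergent by the ratio test. This pins the Gevrey index at exactly $b/a$.

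The main obstacle will be the asymptotic bookkeeping in the last step: I must pass carefully from the ratio estimate $|c_{m+1}/c_m|$ to the correct power of $m!$ in $\Gamma[v^k;u(m)]$ (keeping track of the $a^{am}$, $b^{bm}$ type constants from Stirling), and then verify that the critical index is reached with equality rather than strict inequality, so that $s=b/a$ is attained and not merely approached. I would handle this by computing $\lim_{m\to\infty}\bigl|\rho_s$-coefficient ratio$\bigr|$ as a function of $s$ via the d'Alembert test, exactly as in Section~\ref{sol_generic_point} but now with the extra factor $((am)!/(a(m+1))!)^{s-1}$ included, and read off that the limit is $0$ for $s>b/a$, a finite nonzero constant for $s=b/a$ (still giving a positive radius of convergence), and $+\infty$ for $s<b/a$. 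This yields the Gevrey index $b/a$ uniformly in $k$ (and in $\epsilon\in\CC^*$), completing both parts of the lemma.
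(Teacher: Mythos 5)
Your proposal is correct and follows essentially the same route as the paper: identify the unique polynomial case $q$ via the vanishing of the numerator Pochhammer factor $\left(\frac{\beta-kb}{a}\right)_{bm}$ (uniqueness from $\gcd(a,b)=1$), then pin the Gevrey index at exactly $b/a$ by applying the d'Alembert ratio test to $\rho_s$ of the series viewed in the single variable $x_2^a/(t_1+\epsilon)^b$, whose coefficient ratio tends to $\lim_{m\to\infty}(bm)^b/(am)^{as}$ ($0$, finite, or $\infty$ according to $as>b$, $as=b$, $as<b$). The Stirling-type bookkeeping you flag as the main obstacle in fact dissolves, since this final ratio-test computation --- which is precisely the paper's argument --- settles convergence for $s\geq b/a$ and divergence for $s<b/a$ directly.
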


\begin{proof} The notion of Gevrey index is given in Definition
\ref{def-gevrey-index}. Let assume first that $\beta\in a\NN +
b\NN$. Then there exists a unique  $0\leq q \leq a-1$ such that
$\beta = qb+a\NN$. Then for $m\in \NN$ big enough $\frac{\beta
-qb}{a}-bm$ is a negative integer and the coefficient ${\Gamma[v^q;
u(m)]}$ is zero.
So $\phi_{v^q}$ is a polynomial in $\CC[x_1,x_2]$ (and then
$\phi_{v^q,p}(t_1,x_2)$ is a polynomial in $\CC[t_1,x_2]$) since for
$\frac{\beta -qb}{a}-bm\geq 0$ the expression
$$x^{v^q}x_1^{-bm}x_2^{am}$$ is a monomial in $\CC[x_1,x_2]$.

Let us consider an integer number $k$ with $0\leq k \leq a-1$.
Assume  $\frac{\beta-bk}{a}\not\in \NN $.  Then the formal power
series $\phi_{v^k,p}(t_1,x_2)$ is not a polynomial. We will see that
its Gevrey index is $b/a$. It is enough to prove that the Gevrey
index of
$$\psi(t_1,x_2):= \sum_{m\geq 0}{\Gamma[v^k;u(m)]}(t_1 +\epsilon
)^{-bm} x_2^{am} =
\\
\sum_{m\geq 0}\Gamma[v^k;u(m)]\left(\frac{x_2^a}{(t_1 +\epsilon
)^{b}}\right)^m $$ is $b/a$, i.e. the series
$$\rho_s(\psi(t_1,x_2))= \sum_{m\geq 0}\frac{\Gamma[v^k;u(m)]} {(am)!^{s-1}}
\left(\frac{x_2^a}{(t_1 +\epsilon )^{b}}\right)^m$$ is convergent
for $s=b/a$ and divergent for $s<b/a$.

Considering $\rho_s(\psi(t_1,x_2))$ as a power series in
$(x_2^a/(t_1+\epsilon)^b)$ and writing $$c_m := \frac{\Gamma[v^k;
u(m)]}{(am)!^{s-1}}$$ we have that
 $$\lim_{m\rightarrow \infty}
\left|\frac{c_{m+1}}{c_m}\right| = \lim_{m\rightarrow \infty}
\frac{(bm)^b}{(am)^{as}}$$ and then by using the d'Alembert's ratio
test it follows that the power series $\rho_s(\psi(t_1,x_2))$ is
convergent for $b\leq as$ and divergent for $b>as$.

\end{proof}

\begin{proposition}\label{dim_formal_a}
We have $\dim_{\CC} \left(\mathcal{E}xt^0 ( \mathcal{M}_A (\beta ) ,
\cO_{\widehat{X|Y}})_{p}\right)=a$ for all $\beta \in \CC$, $p\in Y
\setminus \{(0,0)\}$.
\end{proposition}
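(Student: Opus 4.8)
The plan is to identify $\mathcal{E}xt^0(\mathcal{M}_A(\beta), \cO_{\widehat{X|Y}})_p$ with a concrete solution space and then to solve the system by separation of variables along $Y=(x_2=0)$. By Remark \ref{RHomF} applied to $\cF = \cO_{\widehat{X|Y}}$, the $\mathcal{E}xt^0$ is the kernel of $\psi_0^*$, namely the space of germs $f\in \cO_{\widehat{X|Y},p}$ annihilated simultaneously by $P=\partial_1^b-\partial_2^a$ and by $E=E_A(\beta)=ax_1\partial_1+bx_2\partial_2-\beta$. So the whole problem reduces to computing the dimension of this common solution space at $p=(\epsilon,0)$ with $\epsilon\in\CC^*$.

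First I would write a germ as $f=\sum_{i\geq 0} f_i(x_1)\,x_2^i$ with each $f_i$ holomorphic on a common disc around $\epsilon$ not meeting $0$, and impose $E(f)=0$. Since $E(f)=\sum_{i}\big(ax_1 f_i'+(bi-\beta)f_i\big)x_2^i$, the Euler equation decouples degree by degree into the first-order ODE $ax_1 f_i'+(bi-\beta)f_i=0$. On a neighbourhood of $\epsilon\neq 0$ this has a one-dimensional solution space spanned by $x_1^{(\beta-bi)/a}$ (fixing one branch of $\log x_1$, valid simultaneously for all $i$). Hence every solution of $E(f)=0$ has the form $f=\sum_{i\geq 0} C_i\, x_1^{(\beta-bi)/a}\, x_2^i$ with $C_i\in\CC$ arbitrary; this space is visibly infinite-dimensional, so the content lies in cutting it down with $P$.

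Next I would substitute this $f$ into $P$. Because $A v^k=\beta$ and $A u(m)=0$ for the lattice vectors $u(m)$, the exponents are arranged so that both $\partial_1^b f$ and $\partial_2^a f$ contribute to the coefficient of $x_2^j$ a term with the same power $x_1^{(\beta-bj)/a-b}$; comparing these coefficients turns $P(f)=0$ into the recurrence
\begin{equation*}
C_{j+a}\,\frac{(j+a)!}{j!}=C_j\prod_{l=0}^{b-1}\Big(\tfrac{\beta-bj}{a}-l\Big),\qquad j\geq 0.
\end{equation*}
The key structural observation is that this is a purely forward recurrence expressing $C_{j+a}$ as an explicit multiple of $C_j$ (the factor $(j+a)!/j!$ never vanishes), so it splits the index set into the $a$ residue classes modulo $a$ and determines each class from its initial term $C_k$ with $0\leq k\leq a-1$. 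Therefore the common solution space is parametrised exactly by $(C_0,\dots,C_{a-1})\in\CC^a$ and has dimension $a$.

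Finally I would pin down an explicit basis to make the count canonical: the solution with $C_k=1$ and $C_{k'}=0$ for $0\leq k'\leq a-1$, $k'\neq k$, is precisely the $\Gamma$-series $\phi_{v^k,p}$ of Section \ref{irr-at-p}, whose support in the $x_2$-exponent lies in $k+a\NN$. Lemma \ref{gevrey_index_lemma} guarantees that each $\phi_{v^k,p}$ is a genuine germ of $\cO_{\widehat{X|Y},p}$, and the disjointness of the exponent classes yields linear independence for free. The main obstacle, such as it is, is not a hard estimate but the careful bookkeeping of two points: justifying that the formal Euler solutions are holomorphic on a common neighbourhood with a coherent branch choice (this is where $\epsilon\neq 0$ is essential, and exactly where the argument fails at the origin), and verifying the exponent-matching that makes the $P$-equation collapse to the single-index recurrence above rather than a more entangled relation.
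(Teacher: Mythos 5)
Your proof is correct, and it reaches the dimension count by a genuinely different route than the paper. The paper splits the statement into two inequalities: the upper bound $\dim \leq a$ comes from Gr\"obner deformation theory --- citing \cite[Th. 2.5.5]{SST}, for a weight $\omega$ with $a\omega_2 > b\omega_1$ the initial operators at $p$ are $a\epsilon\partial_1$ and $\partial_2^a$, so the initial series of any nonzero formal solution must be a monomial $\lambda x_2^l$ with $0\leq l\leq a-1$ --- while the lower bound comes from exhibiting the $a$ series $\phi_{v^k,p}$, linearly independent because their $x_2$-supports lie in distinct residue classes modulo $a$. You instead determine the solution space exactly in one elementary computation: $E(f)=0$ decouples degree by degree into first-order ODEs in $x_1$ that are regular near $\epsilon\neq 0$, so each coefficient is a constant multiple of a fixed branch of $x_1^{(\beta-bi)/a}$; then $P(f)=0$ collapses, by the exponent matching you point out (both contributions to the coefficient of $x_2^j$ carry the same power $x_1^{(\beta-bj)/a-b}$, precisely because $Av^k=\beta$ and $Au(m)=0$), to a forward recurrence whose leading factor $(j+a)!/j!$ never vanishes, so solutions are parametrized by $(C_0,\ldots,C_{a-1})\in\CC^a$. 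This is more self-contained: it avoids the SST initial-form theorem entirely, yields both bounds simultaneously, and in effect re-derives the paper's Lemma \ref{formaf} and recurrence (\ref{ecrecurrencia}), tools the paper only develops later (and, for Lemma \ref{formaf}, again via initial forms). What the paper's argument buys in exchange is brevity given the cited machinery, and a technique that scales to higher-dimensional hypergeometric systems where explicit degree-by-degree integration is unavailable. The two arguments share only their final step: your canonical basis is the same family $\phi_{v^k,p}$, justified by the same disjoint-support observation.
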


\begin{proof} Recall that $p=(\epsilon,0)$ with $\epsilon\in
\CC^*$.  The operators defining  $\mathcal{M}_A (\beta )_{p}$ are
(using coordinates $(t_1,x_2)$) $P=\partial_1^b-\partial_2^a$ and
$E_p(\beta):=at_1\partial_1+bx_2\partial_2+a\epsilon
\partial_1 -\beta$. We will simply write $E_p=E_p(\beta)$.

First of all, we will prove the inequality $$\dim_{\CC}
\left(\mathcal{E}xt^0 ( \mathcal{M}_A (\beta ) ,
\cO_{\widehat{X|Y}})_{p}\right)\leq a.$$ Assume that $f\in \C [[
t_1, x_2 ]]$, $f\neq 0$, satisfies $E_{p}(f)=P(f)=0$. Then choosing
$\omega \in \R_{>0}^2$ such that  $a \omega_2 > b\omega_1 $, we have
$\inww (E_{p})=a \epsilon
\partial_1$ and  $\inww (P)=\partial_2^a$, where $\inww(-)$ stands for the initial part with respect
to the weights $weight(x_i)=-w_i$, $weight(\partial_i)=w_i$.

Then (see \cite[Th. 2.5.5]{SST}) $\partial_1 (\inw (f))=\partial_2^a
(\inw (f))=0$. So,  $\inw (f)= \lambda_l x_2^l$ for  some $0\leq l
\leq a-1$ and some $\lambda_l\in \CC$. This implies the inequality.

Now, remind that
$$\phi_{v^k , p} = (t_1 +\epsilon )^{\frac{\beta -b
k}{a}}x_2^k \sum_{m\geq 0}{\Gamma[v^k; u(m)]}(t_1 +\epsilon )^{-bm}
x_2^{am}$$ and that the support of such a formal series in
$\CC[[t_1,x_2]]$ is contained in $\N \times (k+a\N)$ for
$k=0,1,\ldots ,a-1$. Then the family $\{\phi_{v^k,p}\,\vert \,
k=0,\ldots,a-1\}$ is $\CC$-linearly independent and they all satisfy
the equations defining $\cM_A(\beta)_{p}$.
\end{proof}

\begin{proposition}\label{dim_gevrey_a_beta_gen}
If  $\beta\notin a\N + b\N$ then
$$\mathcal{E}xt^0 ( \mathcal{M}_A (\beta ) ,
\cO_{\widehat{X|Y}}(s) )_{p}= \left\{ \begin{array}{lc}
  \sum_{k=0}^{a-1} \CC \phi_{v^k,p} & \mbox{ if } s\geq \frac{b}{a} \\
  0 & \mbox{ if } s < \frac{b}{a}
\end{array} \right. $$ for all $p=(\epsilon,0) \in \C^{\ast}\times \{0\}$.
\end{proposition}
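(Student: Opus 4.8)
The plan is to combine the computation of the full formal solution space from Proposition \ref{dim_formal_a} with the Gevrey index computation in Lemma \ref{gevrey_index_lemma}, the hinge being that the $a$ series $\phi_{v^0,p},\ldots,\phi_{v^{a-1},p}$ have pairwise disjoint supports in the $x_2$-exponent. By Remark \ref{RHomF}, $\mathcal{E}xt^0(\mathcal{M}_A(\beta),\cO_{\widehat{X|Y}}(s))_p$ is the space of germs $h\in\cO_{\widehat{X|Y}}(s)_p$ annihilated by the operators $P$ and $E_p(\beta)$ defining $\mathcal{M}_A(\beta)_p$ (as written in the proof of Proposition \ref{dim_formal_a}). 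Since $\cO_{\widehat{X|Y}}(s)_p\subseteq\cO_{\widehat{X|Y},p}$, any such $h$ is in particular a formal solution, so by Proposition \ref{dim_formal_a} it lies in $\sum_{k=0}^{a-1}\CC\phi_{v^k,p}$. Hence the inclusion $\mathcal{E}xt^0(\mathcal{M}_A(\beta),\cO_{\widehat{X|Y}}(s))_p\subseteq\sum_{k}\CC\phi_{v^k,p}$ holds for every $s$, and it only remains to decide which linear combinations actually lie in $\cO_{\widehat{X|Y}}(s)_p$.

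The key observation I would isolate first is that the support of $\phi_{v^k,p}$ in $\CC[[t_1,x_2]]$ is contained in $\N\times(k+a\N)$ (as recalled in the proof of Proposition \ref{dim_formal_a}), so for distinct $k\in\{0,\ldots,a-1\}$ the sets of $x_2$-exponents occupy distinct residue classes modulo $a$ and are therefore disjoint. Consequently, writing $h=\sum_k\lambda_k\phi_{v^k,p}=\sum_i g_i(t_1)x_2^i$, the coefficient $g_i$ comes from the single index $k\equiv i\pmod a$. Since $\rho_s$ is $\CC$-linear (see Section \ref{Gevrey-series}) and a convergent power series converges absolutely on any smaller polydisc, the transformed series $\rho_s(h)=\sum_k\lambda_k\rho_s(\phi_{v^k,p})$ converges at $p$ if and only if each summand with $\lambda_k\neq0$ does; here I use that there are only finitely many $k$, so a common polydisc of convergence exists. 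This decouples the Gevrey condition: $h\in\cO_{\widehat{X|Y}}(s)_p$ if and only if $\lambda_k\phi_{v^k,p}\in\cO_{\widehat{X|Y}}(s)_p$ for every $k$.

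Now I would feed in Lemma \ref{gevrey_index_lemma}: since $\beta\notin a\N+b\N$, the Gevrey index of each $\phi_{v^k,p}$ equals $\frac{b}{a}$, that is, by Definition \ref{def-gevrey-index} together with the convergence at $s=b/a$ established in that proof, $\phi_{v^k,p}\in\cO_{\widehat{X|Y}}(s)_p$ exactly when $s\geq\frac{b}{a}$. For $s\geq\frac{b}{a}$ every $\phi_{v^k,p}$ is then a Gevrey-$s$ solution, so $\sum_k\CC\phi_{v^k,p}\subseteq\mathcal{E}xt^0(\mathcal{M}_A(\beta),\cO_{\widehat{X|Y}}(s))_p$, and combined with the reverse inclusion this yields equality. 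For $s<\frac{b}{a}$ the decoupling forces $\lambda_k\phi_{v^k,p}\in\cO_{\widehat{X|Y}}(s)_p$ for each $k$, which by the index computation is possible only when $\lambda_k=0$; hence $h=0$ and the $\mathcal{E}xt^0$ vanishes.

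The only genuinely delicate point is the decoupling step, namely justifying that the Gevrey condition on the sum is equivalent to the Gevrey condition on each piece. This rests entirely on the disjointness of the $x_2$-supports modulo $a$, which rules out any cancellation between the $\phi_{v^k,p}$ and lets me read off the coefficients of $h$ degree by degree; once that is secured, the argument is pure bookkeeping built on the two earlier results. I would therefore state the disjoint-support fact explicitly as the backbone of the proof rather than leaving it implicit.
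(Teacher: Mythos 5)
Your proof is correct and takes essentially the same route as the paper: the paper's one-line proof likewise combines Proposition \ref{dim_formal_a} (formal solutions at $p$ are exactly $\sum_{k}\CC\phi_{v^k,p}$, with disjoint $x_2$-supports) with Lemma \ref{gevrey_index_lemma} to conclude that any nonzero linear combination has Gevrey index $b/a$. The disjoint-support decoupling you spell out is precisely the argument the paper leaves implicit in that one sentence.
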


\begin{proof} From the proof of Proposition \ref{dim_formal_a} and Lemma \ref{gevrey_index_lemma} it
follows that any linear combination $\sum_{k=0}^{a-1} \lambda_k
\phi_{v^k , p}$ with $\lambda_k \in \C$ has Gevrey index equal to
$b/a$ if  $\beta\notin a\N +b\N$.
\end{proof}

\begin{proposition}\label{dim_gevrey_a_beta_nongeneric}
If $\beta\in a\N+b\N$ then
$$\mathcal{E}xt^0 ( \mathcal{M}_A (\beta ),
\cO_{\widehat{X|Y}}(s) )_{p}= \left\{ \begin{array}{lc}
  \sum_{k=0}^{a-1} \C \phi_{v^k,p } & \mbox{ if } s\geq \frac{b}{a} \\
  \C \phi_{v^{q}} & \mbox{ if } s < \frac{b}{a}
\end{array} \right.$$ for all $p=(\epsilon,0) \in \C^{\ast}\times \{0\}$  where  $q$ is the unique   $k\in \{0,1,\ldots
,a-1\}$ such that $\beta \in kb+a\N$.
\end{proposition}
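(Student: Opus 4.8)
The plan is to combine the computation of the full formal solution space from Proposition \ref{dim_formal_a} with the Gevrey index computation of Lemma \ref{gevrey_index_lemma}, the bridge between them being a disjoint-support argument. First I would recall from Remark \ref{RHomF} that $\mathcal{E}xt^0(\cM_A(\beta),\cO_{\widehat{X|Y}}(s))_p$ is the space of germs $f\in\cO_{\widehat{X|Y}}(s)_p$ with $P(f)=E_p(f)=0$; since $\cO_{\widehat{X|Y}}(s)_p\subset\cO_{\widehat{X|Y},p}$, this is precisely the subspace of Gevrey-order-$s$ elements of the full formal solution space. By Proposition \ref{dim_formal_a} the latter equals $\sum_{k=0}^{a-1}\C\phi_{v^k,p}$. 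So I only need to decide, for an arbitrary combination $f=\sum_{k=0}^{a-1}\lambda_k\phi_{v^k,p}$, exactly when $f$ is a Gevrey series of order $s$ along $Y$ at $p$.

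The key observation is that the supports are separated modulo $a$: by the proof of Proposition \ref{dim_formal_a} the support of $\phi_{v^k,p}$ in $\C[[t_1,x_2]]$ lies in $\N\times(k+a\N)$, so the $x_2$-exponents occurring in $\phi_{v^k,p}$ are exactly those congruent to $k$ modulo $a$. Hence the pieces $\lambda_k\phi_{v^k,p}$ have pairwise disjoint supports, and the same holds after applying $\rho_s$. Since there are only finitely many (namely $a$) pieces, $\rho_s(f)=\sum_k\lambda_k\rho_s(\phi_{v^k,p})$ converges in a neighborhood of $p$ if and only if each $\rho_s(\phi_{v^k,p})$ with $\lambda_k\neq 0$ does; that is, $f$ is Gevrey of order $s$ iff every $\phi_{v^k,p}$ with $\lambda_k\neq 0$ is. I expect this termwise reduction --- justifying that disjoint supports prevent any cancellation that could lower the Gevrey order of a sum --- to be the one step requiring care, though it follows at once from absolute convergence of subseries (and from taking the common neighborhood of the finitely many pieces).

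Finally I would feed in Lemma \ref{gevrey_index_lemma}. Writing $q$ for the unique index with $\beta\in qb+a\N$, the series $\phi_{v^q}=\phi_{v^q,p}$ is a polynomial, hence Gevrey of every order $s\geq 1$, whereas for $k\neq q$ the Gevrey index of $\phi_{v^k,p}$ equals $b/a$, so $\phi_{v^k,p}$ is Gevrey of order $s$ exactly when $s\geq b/a$. Combining this with the termwise criterion gives the dichotomy directly: if $s\geq b/a$ then every $\phi_{v^k,p}$ qualifies and the solution space is all of $\sum_{k=0}^{a-1}\C\phi_{v^k,p}$; if $s<b/a$ then any admissible combination must have $\lambda_k=0$ for all $k\neq q$, leaving exactly $\C\phi_{v^q}$. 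This yields the stated description, and the parallel generic case (Proposition \ref{dim_gevrey_a_beta_gen}) follows from the same argument with the polynomial exception $\phi_{v^q}$ removed.
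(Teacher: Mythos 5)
Your proof is correct and takes essentially the same route as the paper: the paper's (very terse) proof likewise intersects the full formal solution space of Proposition \ref{dim_formal_a} with the Gevrey-order-$s$ subspace and then applies Lemma \ref{gevrey_index_lemma}, where the disjoint-support fact $\supp(\phi_{v^k,p})\subset \N\times(k+a\N)$ is precisely what is being invoked when the paper cites ``the proof of Proposition \ref{dim_formal_a}''. Your write-up simply makes explicit the termwise-reduction (no-cancellation) step that the paper leaves implicit.
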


\begin{proof} The proof is analogous to the one of Proposition
\ref{dim_gevrey_a_beta_gen} and follows from Lemma
\ref{gevrey_index_lemma}.
\end{proof}

\begin{lemma}\label{Eepsilonsobre}
The germ of  $E:=E_A(\beta)$ at any point $p=(\epsilon,0)\in
\C^{\ast}\times \{0\}$ induces a surjective endomorphism on
$\cO_{\widehat{X|Y}} (s)_{p}$ for all $\beta \in \C$, $1\leq s\leq
\infty$.
\end{lemma}

\begin{proof}
We will prove that  $E_p:  \cO_{\widehat{X|Y}} (s)_{(0,0)}
\longrightarrow \cO_{\widehat{X|Y}} (s)_{(0,0)}$ is surjective
(using coordinates $(t_1 ,x_2)$. It is enough to prove that
$F:=\partial_1+bx_2u(t_1)\partial_2 -\beta u(t_1)$ yields  a
surjective endomorphism  on $\cO_{\widehat{X|Y}} (s)_{(0,0)}$, where
$u(t_1)=(a(t_1+\epsilon))^{-1} \in\CC\{t_1\}$. For $s=1$, the
surjectivity of $F$ follows from Cauchy-Kovalevskaya theorem. To
finish the proof it is enough to notice that $\rho_s\circ F = F
\circ \rho_s$ for $1\leq s< \infty$. For $s=\infty$ the result is
obvious.
\end{proof}

\begin{corollary}\label{Ext2} We have: \begin{enumerate} \item[i)]
$\mathcal{E} xt^2 (\mathcal{M}_A(\beta),\cO_{\widehat{X|Y}}(s))_{p}
= 0$ for all $p\in Y$, $p\not=(0,0)$, $\beta \in \C$ and $1\leq s
\leq \infty$.
\item[ii)] $\mathcal{E} xt^2(\mathcal{M}_A (\beta ),\cQ_{Y}(s))_{p}=0$ for all $p\in Y$,
$p\not=(0,0)$, $\beta \in \C$ and $1\leq s \leq \infty$.
\end{enumerate}
\end{corollary}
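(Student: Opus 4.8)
The plan is to read off both vanishing statements directly from the explicit representative of the solution complex given in Remark~\ref{RHomF}: for any left $\cD_X$-module $\cF$, the complex computing the $\cE xt$ groups is
$$0\longrightarrow \cF \stackrel{\psi_0^*}{\longrightarrow} \cF\oplus \cF \stackrel{\psi_1^*}{\longrightarrow} \cF \longrightarrow 0,$$
with $\psi_1^*(f_1,f_2)=(E+ab)(f_1)-P(f_2)$. Hence $\cE xt^2(\cM_A(\beta),\cF)_p$ is precisely the cokernel of $\psi_1^*$ on germs at $p=(\epsilon,0)$, and both i) and ii) amount to proving that $\psi_1^*$ is surjective for $\cF=\cO_{\widehat{X|Y}}(s)$ and for $\cF=\cQ_Y(s)$ respectively.

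For part i) I would restrict $\psi_1^*$ to pairs of the form $(f_1,0)$, which reduces surjectivity of $\psi_1^*$ to surjectivity of the single operator $E+ab$ on $\cO_{\widehat{X|Y}}(s)_p$. Now $E+ab=E_A(\beta-ab)$ (the identity already exploited in the proof of Theorem~\ref{nulidad_en_origen}), so Lemma~\ref{Eepsilonsobre}, applied with the parameter $\beta-ab$ in place of $\beta$, gives exactly that $E+ab$ induces a surjective endomorphism of $\cO_{\widehat{X|Y}}(s)_p$ for every $\beta\in\CC$ and $1\leq s\leq\infty$. Therefore $\psi_1^*$ is surjective and its cokernel vanishes, which is i).

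For part ii) there are two routes, and I would present the first. Route one is direct: since $E$ is a differential operator with polynomial (hence holomorphic) coefficients, it maps the subsheaf $\cO_{X|Y}$ into itself, so the surjectivity of $E+ab$ on $\cO_{\widehat{X|Y}}(s)_p$ descends through the quotient in the exact sequence (\ref{exact-sequence-gevrey-s}) to a surjective endomorphism of $\cQ_Y(s)_p$; repeating the argument of i) with $\cF=\cQ_Y(s)$ then yields the vanishing. Route two instead uses the long exact sequence of $\cE xt$ obtained by applying $\RR\cH om_{\cD_X}(\cM_A(\beta),-)$ to (\ref{exact-sequence-gevrey-s}): in degree $2$ it reads
$$\cE xt^2(\cM_A(\beta),\cO_{\widehat{X|Y}}(s))_p \to \cE xt^2(\cM_A(\beta),\cQ_Y(s))_p \to \cE xt^3(\cM_A(\beta),\cO_{X|Y})_p,$$
where the left term vanishes by part i) and the right term vanishes because the free resolution (\ref{resolucionD}) has length $2$, forcing all $\cE xt^{\geq 3}$ to be zero. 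The middle term is thus squeezed between two zeros.

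The point to stress is that there is no genuine obstacle left at this stage: the whole analytic content sits inside Lemma~\ref{Eepsilonsobre} (the Cauchy--Kovalevskaya surjectivity of $E$ at $s=1$, transported across $\rho_s$ to all Gevrey orders), and Corollary~\ref{Ext2} is a purely formal consequence of it together with Remark~\ref{RHomF}. The only small verifications are the algebraic identity $E+ab=E_A(\beta-ab)$, which lets Lemma~\ref{Eepsilonsobre} apply to the shifted operator, and the stability $E(\cO_{X|Y})\subseteq\cO_{X|Y}$ needed to pass to the quotient in Route one (equivalently, the length of the resolution in Route two).
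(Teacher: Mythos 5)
Your proposal is correct and takes essentially the same approach as the paper: part i) is exactly the paper's argument (the solution complex of Remark~\ref{RHomF} together with surjectivity of $E+ab$ from Lemma~\ref{Eepsilonsobre}), and your ``Route two'' for part ii) is literally the paper's proof, via the long exact sequence attached to (\ref{exact-sequence-gevrey-s}) with the higher $\cE xt$ term vanishing because the resolution (\ref{resolucionD}) has length two. Your preferred ``Route one'' for ii) --- descending surjectivity of $E+ab$ to the quotient $\cQ_Y(s)_p$ and rerunning the cokernel argument --- is a trivially equivalent variant of the same formal deduction, so nothing of substance differs.
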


\begin{proof} {\em i)} We first consider the germ at $p$ of the
solution complex of $\cM_A(\beta)$ as described in Remark
\ref{RHomF} for $\cF=\cO_{\widehat{X|Y}}(s)$. Then we apply that $E
+ ab$ is surjective on $\cO_{\widehat{X|Y}}(s)_p$ (Lemma
\ref{Eepsilonsobre}). \\ {\em ii)} It follows from {\em i)} and the
long exact sequence in cohomology associated with
(\ref{exact-sequence-gevrey-s}).
\end{proof}

%

\subsection{Computation of $\mathcal{E}xt^0 ( \mathcal{M}_A (\beta )
, \cQ_{Y} (s))_{p}$ for $p\in Y$, $p\not=(0,0)$
}\label{ext0_Qs_epsilon}

\begin{lemma}\label{formaf}
Assume that $f\in \C [[t_1 ,x_2 ]]$ satisfies  $E_p(f)=0$. Then
$f=\sum_{k=0}^{a-1} f^{(k)}$ where
$$f^{(k)} =\sum_{m\geq 0} f_{k+am}(t_1 +\epsilon
)^{\frac{\beta -b k}{a}-bm} x_2^{k+am}$$ with  $f_{k+am}\in \C$.
\end{lemma}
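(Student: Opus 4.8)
The plan is to expand $f$ in powers of $x_2$ and to solve, coefficient by coefficient, the ordinary differential equations in $t_1$ that the condition $E_p(f)=0$ imposes. In the coordinates $(t_1,x_2)$ the operator reads $E_p = a(t_1+\epsilon)\partial_1 + bx_2\partial_2 - \beta$, with $\partial_1 = \partial/\partial t_1$. First I would write $f = \sum_{j\geq 0} g_j(t_1)\, x_2^j$ with each $g_j\in\C[[t_1]]$. Since $x_2\partial_2(g_j x_2^j) = j g_j x_2^j$ and $\partial_1(g_j x_2^j) = g_j' x_2^j$, applying $E_p$ gives
$$E_p(f) = \sum_{j\geq 0}\left( a(t_1+\epsilon)\, g_j'(t_1) + (bj-\beta)\, g_j(t_1)\right) x_2^j.$$
Hence $E_p(f)=0$ is equivalent to the family of first order linear ODEs $a(t_1+\epsilon)\, g_j'(t_1) + (bj-\beta)\, g_j(t_1) = 0$, one for each $j\geq 0$.

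Next I would solve each of these equations inside $\C[[t_1]]$. Writing $g_j = \sum_{n\geq 0} g_j^{(n)} t_1^n$, the equation is equivalent to the recurrence $a\epsilon\,(n+1)\, g_j^{(n+1)} = (\beta - bj - an)\, g_j^{(n)}$ for $n\geq 0$; since $a\epsilon(n+1)\neq 0$, this determines every coefficient uniquely from $g_j^{(0)}$, so the solution space is one dimensional with no obstruction. On the other hand, as $\epsilon\neq 0$ the function $(t_1+\epsilon)^{(\beta-bj)/a}$ is holomorphic at $t_1=0$, lies in $\C[[t_1]]$, and is immediately checked to solve the equation; it is nonzero at $t_1=0$, hence spans the solution space. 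Therefore $g_j(t_1) = f_j\,(t_1+\epsilon)^{(\beta-bj)/a}$ for a unique $f_j\in\C$.

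Finally I would regroup the terms according to the residue of $j$ modulo $a$: writing $j = k+am$ with $0\leq k\leq a-1$ and $m\geq 0$, and using $(\beta-bj)/a = (\beta-bk)/a - bm$, one recovers $f = \sum_{k=0}^{a-1} f^{(k)}$ with $f^{(k)} = \sum_{m\geq 0} f_{k+am}\,(t_1+\epsilon)^{(\beta-bk)/a - bm}\, x_2^{k+am}$, which is exactly the asserted form. The computation is routine; the only point deserving care is that the exponent $(\beta-bk)/a - bm$ is in general not a nonnegative integer, so each $g_j$ is a genuine (convergent) power series in $t_1$ rather than a monomial. The fact that the solution is nonetheless unique up to a scalar is precisely what the non-singularity of the ODE at $t_1=0$ guarantees, and this is where the hypothesis $\epsilon\neq 0$ (that is, $p\neq(0,0)$) is essential.
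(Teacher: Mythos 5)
Your proof is correct, and it takes a genuinely different and more elementary route than the paper's. The paper's own proof runs through the theory of initial forms (Gr\"obner deformations): it invokes \cite[Th. 2.5.5]{SST} to get $\operatorname{in}_{(-\omega,\omega)}(E_p)(\operatorname{in}_{\omega}(f))=0$ for suitable weight vectors $\omega$, deduces from this the shape of the $(0,1)$-initial form of $f$ (a single term $f_{am+k}(t_1+\epsilon)^{\frac{\beta-bk}{a}-bm}x_2^{k+am}$), and then concludes by induction applied to $f-\operatorname{in}_{(0,1)}(f)$. You instead exploit directly the fact that $E_p=a(t_1+\epsilon)\partial_1+bx_2\partial_2-\beta$ is homogeneous of degree zero in $x_2$, so that $E_p(f)=0$ decouples into the family of first-order ODEs $a(t_1+\epsilon)g_j'+(bj-\beta)g_j=0$ for the coefficients $g_j\in\C[[t_1]]$; since $\epsilon\neq 0$ makes $t_1=0$ an ordinary point, your recurrence $a\epsilon(n+1)g_j^{(n+1)}=(\beta-bj-an)g_j^{(n)}$ shows each formal solution space is exactly one-dimensional and spanned by the Taylor expansion of $(t_1+\epsilon)^{(\beta-bj)/a}$, and regrouping $j=k+am$ gives the statement. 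What your approach buys: it is self-contained (no appeal to the Saito--Sturmfels--Takayama machinery), it isolates precisely where the hypothesis $p\neq(0,0)$ enters, and it gives the uniqueness of the constants $f_{k+am}$ for free. What the paper's approach buys: the initial-form technique is the same tool used elsewhere in the paper (e.g. in the proof of Proposition \ref{dim_formal_a}), so the argument fits into an already-established toolkit and adapts to operators that do not act diagonally on a natural grading. Both arguments reach the same conclusion, and yours is complete as written.
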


\begin{proof} Let us sketch the proof. We know that
$\inww(E_p)(\inw (f))=0$ \cite[Th. 2.5.5]{SST} for all $\omega
=(\omega_1,\omega_2) \in \RR^2_{\geq 0}$.

If $\omega_1>0$ then $\inww(E_p)=a\epsilon \partial_1$ and so, $\inw
(f) \in \CC[[x_2]]$ for all $\omega$ with $\omega_1>0$. On the other
hand, if $w_1=0$ then $\inww(E_p)=E_p$ and in particular
$E_p(\operatorname{in}_{(0,1)}(f))=0$ and  $\inw
(\operatorname{in}_{(0,1)}(f))\in \C [x_2 ]$, for all $\omega \in
\R^2_{>0}$.

There exists a unique $(k,m)$ with  $k\in \{0,\ldots ,a-1\}$ and $
m\in \N$ such that $\operatorname{in}_{(0,1)}(f)= x_2^{am+k} h(t_1
)$ for some $h(t_1 )\in \C [[ t_1 ]] $ with $h(0)\neq 0$.

There exists  $f_{am+k}\in \C^{\ast }$ such that  $t_1$ divides
$$\operatorname{in}_{(0,1)}(f)- f_{am+k}(t_1 +\epsilon
)^{\frac{\beta -b k}{a}-bm} x_2^{k+am}\in \C [[ t_1 ]] x_2^{am+k}.$$
But we have
$$E_p
(\operatorname{in}_{(0,1)}(f)- f_{am+k}(t_1
+\epsilon )^{\frac{\beta -b k}{a}-bm} x_2^{k+am})=0.$$ This implies
that  $\operatorname{in}_{(0,1)}(f)= f_{am+k}(t_1 +\epsilon
)^{\frac{\beta -b k}{a}-bm} x_2^{k+am}$.

We finish by induction by applying the same argument to $f-
\operatorname{in}_{(0,1)}(f)$ since $E_p(f-
\operatorname{in}_{(0,1)}(f))=0.$
\end{proof}

Let's recall that $Y=(x_2=0)\subset X=\CC^2$ and $v^k
=(\frac{\beta-bk}{a},k)$ for $k=0,\ldots,a-1$.

\begin{remark}\label{phi_w}
As in the proof of Lemma \ref{gevrey_index_lemma} if  $\beta\in a\NN
+ b\NN$ then there exists a unique $0\leq q \leq a-1$ such that
$\beta \in  qb+a\NN$. Let us write $m_0=\frac{\beta -qb}{a}$.


The series $\phi_{v^q}$ is in fact a polynomial in $\CC[x_1,x_2]$
since for $m_0-bm\geq 0$ the expression $x^{v^q}x_1^{-bm}x_2^{am}$
is a monomial in $\CC[x_1,x_2]$.

Let us write $m'$  the smallest integer number satisfying $bm'\geq
m_0+1$  and
$$\widetilde{v^q}:=v^q+u(m')=(m_0-bm',q+am').
$$ Let us notice that $A \widetilde{v^q} = \beta$ and that $\widetilde{v^q}$
does not have minimal negative support (see \cite[p. 132-133]{SST})
and then the $\Gamma$--series $\phi_{\widetilde{v^q}}$ is not a
solution of $H_A(\beta)$. We have
$$\phi_{\widetilde{v^q}} = x^{\widetilde{v^q}}
\sum_{m\in \NN;\, bm\geq m_0+1} \Gamma[\widetilde{v^q}; u(m)]
x_1^{-bm}x_2^{am}.$$ It is easy to prove that
$H_A(\beta)_p(\phi_{\widetilde{v^q},p}) \subset \cO_{X,p}$ for all
$p=(\epsilon,0)\in X$ with $\epsilon\neq 0$, and that
$\phi_{\widetilde{v^q},p}$ is a Gevrey series of index $b/a$.
\end{remark}

\begin{theorem}\label{Ext0cociente}
For all $p\in Y\setminus\{(0,0)\}$ and $ \beta \in \C$ we have
$$\dim_{\C} (\mathcal{E}xt^0\left( \mathcal{M}_A (\beta ) , \cQ_{Y}
(s))_{p}\right)= \left\{\begin{array}{lcl}
a & \mbox{ if } & s\geq b/a \\
& \\
0 & \mbox{ if } & s<b/a
\end{array}
\right.$$ Moreover, we also have
\begin{enumerate}
\item[i)] If $\beta \notin a\N +b\N$ then:
$$\mathcal{E}xt^0 ( \mathcal{M}_A (\beta ) , \cQ_{Y}
(s))_{p}= \sum_{k=0}^{a-1}\C \overline{\phi_{v^k, p}}$$ for all
$s\geq b/a$
\item[ii)] If  $\beta \in a\N +b\N$ then for all $s\geq b/a$ we have :
$$\mathcal{E}xt^0 ( \mathcal{M}_A (\beta ) , \cQ_{Y}
(s))_{p}= \sum_{k=0 ,k\neq q}^{a-1}\C \overline{\phi_{v^k,p}} +\C
\overline{{\phi}_{\widetilde{v^q},p}}$$ with
${\phi}_{\widetilde{v^q}}$ as in Remark \ref{phi_w}.
\end{enumerate}
Here  $\overline {\phi}$ stands for the class modulo $\cO_{X|Y,p}$
of $\phi \in \cO_{\widehat{X|Y}}(s)_p$.
\end{theorem}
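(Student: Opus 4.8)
The plan is to describe $\mathcal{E}xt^0(\cM_A(\beta),\cQ_Y(s))_p$ directly as a space of solutions modulo convergent germs and to exhibit an explicit basis. By Remark \ref{RHomF} this $\cE xt^0$ is the space of $\overline f\in\cQ_Y(s)_p$ with $P(\overline f)=E_p(\overline f)=0$; lifting $\overline f$ to $f\in\cO_{\widehat{X|Y}}(s)_p$, it is the space of Gevrey-$s$ series $f$ with $P(f),E_p(f)\in\cO_{X,p}$, taken modulo $\cO_{X,p}$. Since $E_p$ is surjective on convergent germs (Lemma \ref{Eepsilonsobre} with $s=1$), I would subtract from $f$ a convergent germ and assume $E_p(f)=0$ exactly, without changing $\overline f$ nor the convergence of $P(f)$ (here $P$ has constant coefficients). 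Lemma \ref{formaf} then gives $f=\sum_{k=0}^{a-1}f^{(k)}$ with $f^{(k)}=\sum_{m\ge0}f_{k+am}(t_1+\epsilon)^{(\beta-bk)/a-bm}x_2^{k+am}$. Because $P=\partial_1^b-\partial_2^a$ preserves the residue of the $x_2$-degree modulo $a$, and a convergent germ splits into convergent pieces according to that residue, the problem decouples: $P(f)\in\cO_{X,p}$ if and only if $P(f^{(k)})\in\cO_{X,p}$ for each $k$, and $\overline f=\sum_k\overline{f^{(k)}}$ with summands supported in disjoint residue classes. Thus it suffices to analyse one class $k$ at a time.

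For the lower bound I would exhibit, for $s\ge b/a$, exactly $a$ linearly independent classes. When $\beta\notin a\N+b\N$ these are $\overline{\phi_{v^k,p}}$, $k=0,\dots,a-1$: each is a genuine solution, hence a solution in $\cQ_Y(s)_p$, and each has Gevrey index $b/a\le s$ (Lemma \ref{gevrey_index_lemma}), so it lies in $\cO_{\widehat{X|Y}}(s)_p$ but is divergent, hence nonzero in $\cQ_Y(s)_p$. When $\beta\in a\N+b\N$ the series $\phi_{v^q}$ is a polynomial (so $\overline{\phi_{v^q,p}}=0$), and I replace it by $\overline{\phi_{\widetilde{v^q},p}}$, which by Remark \ref{phi_w} is Gevrey of index $b/a$ and satisfies $H_A(\beta)_p(\phi_{\widetilde{v^q},p})\subset\cO_{X,p}$, i.e.\ is a solution in $\cQ_Y(s)_p$; together with $\overline{\phi_{v^k,p}}$ for $k\neq q$ this again gives $a$ classes. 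Linear independence follows in all cases because the $x_2$-supports lie in pairwise distinct residue classes modulo $a$ (note $\widetilde{v^q}$ occupies class $q$), so any vanishing combination forces each summand to be convergent, hence zero.

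The main work, and the step I expect to be the real obstacle, is the matching upper bound: within a fixed residue class $k$, every Gevrey-$s$ solution of $E_p(f^{(k)})=0$ with $P(f^{(k)})$ convergent is, modulo $\cO_{X,p}$, a scalar multiple of $\overline{\phi_{v^k,p}}$ (or of $\overline{\phi_{\widetilde{v^q},p}}$ when $k=q$). Writing $c_m$ for the coefficients of $P(f^{(k)})$, the condition $P(f^{(k)})\in\cO_{X,p}$ becomes a first-order inhomogeneous recurrence $f_{k+a(m+1)}B_m=f_{k+am}A_m-c_m$, where $A_m$ (a product of $b$ consecutive values of $\tfrac{\beta-bk}{a}-bm$) and $B_m$ (a product of $a$ consecutive integers near $am$) are exactly the factors for which the homogeneous solution is $\Gamma[v^k;u(m)]$. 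When $\Gamma[v^k;u(m)]\neq0$ for all $m$ (the case $k\neq q$, and all $k$ when $\beta\notin a\N+b\N$), the substitution $f_{k+am}=g_m\,\Gamma[v^k;u(m)]$ turns the recurrence into $g_{m+1}-g_m=-c_m/(B_m\,\Gamma[v^k;u(m+1)])$; since $c_m$ grows at most geometrically (convergence of $P(f^{(k)})$) while $B_m\,\Gamma[v^k;u(m+1)]$ grows factorially (Gevrey index $b/a>1$), the series $\sum_m c_m/(B_m\Gamma[v^k;u(m+1)])$ converges, so $g_m\to g_\infty$ with $g_m-g_\infty$ decaying factorially. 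One then checks that $\sum_m(g_m-g_\infty)\Gamma[v^k;u(m)](t_1+\epsilon)^{(\beta-bk)/a-bm}x_2^{k+am}$ has geometrically bounded coefficients and is therefore convergent, yielding $\overline{f^{(k)}}=g_\infty\,\overline{\phi_{v^k,p}}$. For $k=q$ with $\beta\in a\N+b\N$ the coefficient $\Gamma[v^q;u(m)]$ vanishes for $m\ge m'$, so $A_{m'-1}=0$ decouples the recurrence: the head ($m<m'$) is a finite sum, hence convergent, while the tail solves the same homogeneous recurrence based at $\widetilde{v^q}$, and the identical estimate gives $\overline{f^{(q)}}=\lambda\,\overline{\phi_{\widetilde{v^q},p}}$. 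These growth comparisons, geometric numerator against factorial denominator, are the crux.

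Combining the two bounds shows $\dim\cE xt^0(\cM_A(\beta),\cQ_Y(s))_p=a$ for $s\ge b/a$, with the stated bases. For $s<b/a$ the same recurrence analysis still gives $\overline{f^{(k)}}=\lambda_k\overline{\phi_{v^k,p}}$ (resp.\ $\lambda\,\overline{\phi_{\widetilde{v^q},p}}$), but now each $f^{(k)}$ is Gevrey of order $s$ whereas $\phi_{v^k,p}$ and $\phi_{\widetilde{v^q},p}$ have Gevrey index exactly $b/a>s$; since $\rho_s$ of a convergent germ always converges, $\lambda_k\neq0$ would force Gevrey index $b/a$, a contradiction. Hence every $\lambda_k=0$, each $f^{(k)}$ is convergent, and $\cE xt^0(\cM_A(\beta),\cQ_Y(s))_p=0$, which closes the case $s<b/a$.
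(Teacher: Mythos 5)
Your proof is correct, but it is organized quite differently from the paper's. The paper deduces Theorem \ref{Ext0cociente} almost formally: it applies the long exact sequence in cohomology attached to the exact sequence (\ref{exact-sequence-gevrey-s}), feeding in the already-computed Gevrey solution spaces $\cE xt^0(\cM_A(\beta),\cO_{\widehat{X|Y}}(s))_p$ (Propositions \ref{dim_gevrey_a_beta_gen} and \ref{dim_gevrey_a_beta_nongeneric}), the vanishing $\cE xt^1(\cM_A(\beta),\cO_{\widehat{X|Y}}(s))_p=0$ for $s\geq b/a$ (Theorem \ref{Ext1Gevrey}), and the structure of $\cE xt^1(\cM_A(\beta),\cO_{X|Y})_p$ (Theorem \ref{Ext1Gevrey2} with $s=1$); in that bookkeeping the extra class $\overline{\phi_{\widetilde{v^q},p}}$ appears as a preimage, under the connecting homomorphism, of the generator $(P(\phi_{\widetilde{v^q},p}),0)$. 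You instead compute $\cE xt^0(\cM_A(\beta),\cQ_Y(s))_p$ head-on: reduce to $E_p(f)=0$ by the $s=1$ case of Lemma \ref{Eepsilonsobre}, decompose via Lemma \ref{formaf}, decouple the mod-$a$ residue classes of the $x_2$-degree, and solve the inhomogeneous recurrence imposed by $P(f^{(k)})\in\cO_{X,p}$, comparing its solution with $\phi_{v^k,p}$ (resp.\ $\phi_{\widetilde{v^q},p}$ when $k=q$). Be aware, though, that your recurrence step --- substituting by the homogeneous solution $\Gamma[v^k;u(m)]$, passing to the limit $g_\infty$ (your $\lambda_k$), and the geometric-numerator-versus-factorial-denominator estimates --- is precisely the analytic content of the paper's Lemma \ref{lemma-ext1} (there with Gevrey-$s$ right-hand side, here with convergent right-hand side), which the paper proves anyway en route to Theorem \ref{Ext1Gevrey2}; so the hard analysis is shared, only the homological packaging differs. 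What your route buys is self-containedness and a uniform treatment of $s<b/a$ and $s\geq b/a$ with no $\cE xt^1$ computation at all; what the paper's route buys is a very short proof of this particular theorem, the reuse of intermediate results (Theorems \ref{Ext1Gevrey} and \ref{Ext1Gevrey2}) that are needed elsewhere (Proposition \ref{ext1(QY(s))iszero} and the perversity argument in the Conclusions), and a conceptual explanation, via the connecting map, of exactly why $\phi_{v^q}$ must be replaced by $\phi_{\widetilde{v^q}}$ in case ii).
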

\begin{proof} It follows from Propositions
\ref{dim_gevrey_a_beta_gen} and \ref{dim_gevrey_a_beta_nongeneric},
from the proofs of Lemma \ref{gevrey_index_lemma} and Proposition
\ref{dim_formal_a} (by using the long exact sequence in cohomology)
and Theorem \ref{Ext1Gevrey} below.
\end{proof}

\subsection{Computation  of $\mathcal{E} xt^1 (\mathcal{M}_A (\beta )
,\cQ_Y (s))_p$ for $p\in Y$, $p\not=(0,0)$}

\begin{theorem}\label{Ext1Gevrey}
For all  $\beta \in \C$ we have  $$\mathcal{E} xt^1 (\mathcal{M}_A
(\beta ) ,\cO_{\widehat{X|Y}} (s))_{p}=0$$ for all $s\geq b/a$ and
for all $p\in Y$, $p\not=(0,0)$.
\end{theorem}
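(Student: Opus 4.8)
The plan is to imitate the case $i=1$ of Theorem \ref{nulidad_en_origen}, replacing the automorphism property of $E$ at the origin (Corollary \ref{coroE}), which fails at $p=(\epsilon,0)$, by the mere surjectivity of $E_p$ established in Lemma \ref{Eepsilonsobre}. By Remark \ref{RHomF} the group $\cE xt^1(\cM_A(\beta),\cO_{\widehat{X|Y}}(s))_p$ is the cohomology at the middle term of $0\to \cF \xrightarrow{\psi_0^*}\cF\oplus\cF\xrightarrow{\psi_1^*}\cF\to 0$, with $\cF=\cO_{\widehat{X|Y}}(s)_p$, so I must show: whenever $(f,g)\in \cF^2$ satisfies $(E_p+ab)(f)=P(g)$, there is $h\in\cF$ with $P(h)=f$ and $E_p(h)=g$. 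First I would use Lemma \ref{Eepsilonsobre} to choose $h_0\in\cF$ with $E_p(h_0)=g$; since $PE_p=(E_p+ab)P$, the element $\widetilde f:=f-P(h_0)\in\cF$ satisfies $(E_p+ab)(\widetilde f)=P(g)-PE_p(h_0)=0$, i.e. $\widetilde f\in\ker(E_p+ab)\cap\cF$. Thus everything reduces to one statement: that $P$ maps $\ker(E_p)\cap\cF$ onto $\ker(E_p+ab)\cap\cF$. Granting a preimage $w\in\ker(E_p)\cap\cF$ of $\widetilde f$, the germ $h:=h_0+w$ satisfies $E_p(h)=g$ and $P(h)=P(h_0)+\widetilde f=f$.

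To prove this surjectivity I would use Lemma \ref{formaf} to describe both kernels explicitly. For parameter $\beta$ it gives $\ker(E_p)\cap\C[[t_1,x_2]]=\bigoplus_{k=0}^{a-1}$ of the spaces spanned by the monomials $M_m^{(k)}:=(t_1+\epsilon)^{\frac{\beta-bk}{a}-bm}x_2^{k+am}$, $m\ge 0$, and with $\beta$ replaced by $\beta-ab$ it describes $\ker(E_p+ab)$ via the monomials $N_m^{(k)}:=(t_1+\epsilon)^{\frac{\beta-bk}{a}-b(m+1)}x_2^{k+am}$. Since $\partial_1^b$ preserves the $x_2$-degree and $\partial_2^a$ lowers it by $a$, the operator $P$ preserves the residue $k\bmod a$, so it suffices to treat each $k$-component. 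A direct computation (each $M_m^{(k)}$ is annihilated by $E_p$) gives
$$P\,M_m^{(k)} = \Big(\textstyle\prod_{i=0}^{b-1}(c_m-i)\Big)N_m^{(k)} - \Big(\textstyle\prod_{i=0}^{a-1}(d_m-i)\Big)N_{m-1}^{(k)},$$
with $c_m=\frac{\beta-bk}{a}-bm$, $d_m=k+am$ and $N_{-1}^{(k)}:=0$ (note $\prod_{i=0}^{a-1}(d_0-i)=0$ because $0\le k\le a-1$). Hence for $w^{(k)}=\sum_m a_m M_m^{(k)}$ the equation $P(w^{(k)})=\widetilde f^{(k)}=\sum_m b_m N_m^{(k)}$ becomes the two-term recurrence $a_m\prod_{i=0}^{b-1}(c_m-i)-a_{m+1}\prod_{i=0}^{a-1}(d_{m+1}-i)=b_m$. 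Because $\prod_{i=0}^{a-1}(d_{m+1}-i)=\prod_{i=0}^{a-1}(k+a(m+1)-i)\neq 0$ for every $m\ge 0$, one can always solve for $a_{m+1}$ in terms of $a_m$ and $b_m$; prescribing $a_0$ freely then produces a formal preimage (the freedom in $a_0$ per component recovering the $a$-dimensional space $\cE xt^0$).

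The main obstacle is to show the preimage actually lies in the Gevrey class $\cF$, and this is precisely where $s\ge b/a$ enters. By variation of parameters the recurrence solves to $a_n=\rho_n\big(a_0+\sum_{j=0}^{n-1}s_j/\rho_{j+1}\big)$, where $\rho_n=\prod_{j=0}^{n-1}\big(\prod_i(c_j-i)/\prod_i(d_{j+1}-i)\big)$ and $s_j=-b_j/\prod_i(d_{j+1}-i)$. The homogeneous factor $\rho_n$ is, up to a geometric factor, the coefficient sequence of the $\Gamma$-series $\phi_{v^k,p}$, whose Gevrey index is exactly $b/a$ by Lemma \ref{gevrey_index_lemma}, and asymptotically $|\rho_n|\sim C^n(n!)^{b-a}$. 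Feeding in the Gevrey-$s$ bounds on the $b_j$ coming from $g\in\cF$ and the estimates $\prod_i(c_j-i)\sim(bj)^b$, $\prod_i(d_{j+1}-i)\sim(aj)^a$, a d'Alembert-type computation should show that $|a_n|/(an)!^{s-1}$ grows at most geometrically once $as\ge b$, so that $\rho_s$ applied to $\sum_k w^{(k)}$ converges near $p$; the inequality $as\ge b$ is exactly what prevents the ``integration'' in the variation-of-parameters sum from raising the Gevrey index above $s$. This growth estimate is the delicate point. Granting it, $\widetilde f$ has a preimage $w\in\ker(E_p)\cap\cF$ under $P$, and $h=h_0+w$ gives $P(h)=f$, $E_p(h)=g$, proving $\cE xt^1(\cM_A(\beta),\cO_{\widehat{X|Y}}(s))_p=0$ for $s\ge b/a$.
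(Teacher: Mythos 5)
Your reduction is exactly the paper's: the paper also first uses Lemma \ref{Eepsilonsobre} to solve $E_p(\widehat{h})=g$, replaces $(f,g)$ by $(\widehat{f},0)$ with $(E_p+ab)(\widehat{f})=0$, and then, invoking Lemma \ref{formaf} for the parameters $\beta$ and $\beta-ab$, arrives at precisely your two-term recurrence --- it is the paper's relation (\ref{ecrecurrencia}), with $a_m=h_{k+am}$, $b_m=f_{k+am}$, solvable for $a_{m+1}$ because $(k+a(m+1))_a\neq 0$. Up to that point your argument is correct and coincides with the paper's. But the theorem's entire analytic content --- that the formal solution of the recurrence lies in $\cO_{\widehat{X|Y}}(s)_p$ when $s\geq b/a$ --- is exactly the step you defer with ``a d'Alembert-type computation should show'' and ``granting it''. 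Everything before it is soft algebra; the estimate is the only place $s\geq b/a$ is used, so leaving it unproved is a genuine gap, not a routine omission.

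Moreover, the specific route you sketch for that estimate fails as written in the resonant case. Your variation-of-parameters formula $a_n=\rho_n\bigl(a_0+\sum_{j=0}^{n-1}s_j/\rho_{j+1}\bigr)$ divides by $\rho_{j+1}=\prod_{l\leq j}(c_l)_b/(d_{l+1})_a$; when $\beta\in a\NN+b\NN$ and $k=q$ (so that $\frac{\beta-bq}{a}=m_0\in\NN$), the factor $(c_{j_0})_b$ vanishes at $j_0=[m_0/b]$, hence $\rho_{j+1}=0$ for all $j\geq j_0$ and the closed form collapses --- this is the same resonance that forces the paper to replace $\phi_{v^q}$ by $\phi_{\widetilde{v^q}}$ in Remark \ref{phi_w}. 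The paper sidesteps both difficulties at once: it never writes a closed-form solution, but divides the recurrence by $(k+a(m+1))!^{s-1}$ and proves $|h_{k+am}|/(k+am)!^{s-1}\leq CD^m$ by induction on $m$, using only that the set $\bigl\{|(\frac{\beta-bk}{a}-bm)_b|/((k+a(m+1))_a)^s : m\in\NN\bigr\}$ is bounded (its limit is at most $(b/a)^b$ precisely because $as\geq b$), with $C_1$ such a bound, $C=\max\{\widetilde{C},|h_k|/k!^{s-1}\}$ and $D=\max\{\widetilde{D},C_1+1\}$. That induction is uniform in $\beta$ (zeros of $(c_m)_b$ only make the bound easier) and takes a few lines. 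If you insist on the variation-of-parameters framing you must restart the summation after $j_0$ --- note $a_{j_0+1}=-b_{j_0}/(d_{j_0+1})_a$ is then forced, independently of $a_{j_0}$ --- and redo the growth estimate on each piece, which is strictly more work than the paper's direct induction.
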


\begin{proof} We will use the germ at $p$ of the solution
complex of $\cM_A(\beta)$ with values in
$\cF=\cO_{\widehat{X|Y}}(s)$ (see Remark \ref{RHomF}): $$
0\rightarrow \cO_{\widehat{X|Y}}(s)
\stackrel{\psi_0^*}{\longrightarrow} \cO_{\widehat{X|Y}}(s)\oplus
\cO_{\widehat{X|Y}}(s) \stackrel{\psi_1^*}{\longrightarrow}
\cO_{\widehat{X|Y}}(s) \rightarrow 0$$

Let us consider $(f,g)\in (\cO_{\widehat{X|Y}} (s)_{p})^2$ in the
germ at $p$ of $\ker(\psi_1^*)$, i.e. $(E_p+a b )(f)=P(g)$. We want
to prove that there exists $h \in \cO_{\widehat{X|Y}} (s)_{p}$ such
that $P(h)=f$ and $E_p(h)=g$.

From Lemma  \ref{Eepsilonsobre}, there exists $\widehat{h} \in
\cO_{\widehat{X|Y}} (s)_{p}$ such that  $E_{p} ( \widehat{h})=g$.
Then:

$$(f,g)=(P(\widehat{h}), E_p(\widehat{h}))+
(\widehat{f},0)$$ where  $\widehat{f}=f-P(\widehat{h})\in
\cO_{\widehat{X|Y}} (s)_{p}$ and $(\widehat{f},0)\in
\operatorname{Ker}(\psi_1^{\ast})$. In order to finish the proof it
is enough to prove that there exists $h \in \cO_{\widehat{X|Y}}
(s)_{p}$ such that $P(h)=\widehat{f}$ and $E_p(h)=0$.

Since $h, \widehat{f} \in \C [[t_1 ,x_2 ]]$,
$(E_p+ab)(\widehat{f})=0$ and $E_p(h)$ must be $0$, it follows from
Lemma \ref{formaf} that
$$h=\sum_{k=0}^{a-1} \sum_{m\geq 0} h_{k+am} (t_1 +\epsilon)^{\frac{\beta -b
k}{a}-bm} x_2^{k+am}, $$ $$\widehat{f}=\sum_{k=0}^{a-1} \sum_{m\geq
0} f_{k+am} (t_1 +\epsilon)^{\frac{\beta -b k}{a}-b(m+1)}
x_2^{k+am}$$ with  $h_{k+am}, f_{k+am} \in \C$.

The equation $P(h)=\widehat{f}$ is equivalent to the recurrence
relation:
\begin{align}
h_{k+a(m+1)}=\frac{1}{(k+a(m+1))_a } \left(\left(\frac{\beta -b
k}{a}-bm \right)_b h_{k+am} - f_{k+am}\right) \label{ecrecurrencia}
\end{align}
for $k=0,\ldots, a-1$ and $m\in \NN$. The solution to this
recurrence relation proves that there exists $h\in \C [[ t_1 , x_2
]]$ such that $P(h)=\widehat{f}$ and $E_p (h)=0$.

We need to prove now that  $h\in \cO_{\widehat{X|Y}}(s)_{p}$.

Dividing  (\ref{ecrecurrencia}) by  $((k+a(m+1))!)^{s-1 }$ we get:

$$\frac{h_{k+a(m+1)}}{(k+a(m+1))!^{s-1 }}=$$ $$\frac{1}{((k+a(m+1))_a)^s }
\left(\left(\frac{\beta -b k}{a}-bm \right)_b
\frac{h_{k+am}}{(k+am)!^{s-1 } }- \frac{f_{k+am}}{(k+am)!^{s-1 } }
\right)$$ So it is enough to prove that there exists  $C ,D>0$ such
that
\begin{align}\label{CD} \left|\frac{h_{k+am}}{(k+am)!^{s-1 }}\right|\leq C D^m
\end{align} for all $0\leq k \leq a-1$ and  $m\geq 0$. We will argue by induction on $m$.

Since $\rho_s (\widehat{f})$ is convergent, there exists
$\widetilde{C}, \widetilde{D}>0$ such that
$$\frac{|f_{k+am}|}{(k+am)!^{s-1 } } \leq \widetilde{C}
\widetilde{D}^m$$ for all $m\geq 0$ and $k=0,\ldots,a-1$.


Since  $s\geq b/a$, we have
$$\lim_{m\rightarrow  \infty }\frac{|(\frac{\beta -b k}{a}-bm )_b|}{((k+a(m+1))_a)^s } \leq
(b/a)^b$$ and then there exists an upper bound  $C_1>0$ of the set
$$\left\{ \frac{\left|(\frac{\beta -b k}{a}-bm )_b\right|}{((k+a(m+1))_a)^s } : m\in
\N\right\}.$$ Let us consider  $$C = \max \{\widetilde{C} ,
\frac{|h_k| }{k!^{s-1}}\,; k=0,\ldots,a-1 \}$$ and  $$D= \max\{
\widetilde{D}, C_1 + 1 \}.$$

So, the case  $m=0$ of (\ref{CD}) follows from the definition of
$C$. Assume  $|\frac{h_{k+am}}{(k+am)!^{s-1 }}|\leq C D^m$. We will
prove inequality (\ref{CD}) for $m+1$. From the recurrence relation
we deduce:

$$\left|\frac{h_{k+a(m+1)}}{(k+a(m+1))!^{s-1 }}\right|\leq C_1
\left|\frac{h_{k+am}}{(k+am)!^{s-1 }}\right| +  \widetilde{C}
\widetilde{D}^m$$ and using the induction hypothesis and the
definition of $C,D$ we get:
$$\left|\frac{h_{k+a(m+1)}}{(k+a(m+1))!^{s-1 }}\right|\leq (C_1 +1 ) C D^m \leq C D^{m+1}.$$
In particular  $\rho_s (h)$ converges and  $h\in
\cO_{\widehat{X|Y}}(s)_{p}$.
\end{proof}

\begin{lemma}\label{lemma-ext1}
Assume that $h \in \cO_{\widehat{X|Y},p}$, $p\in Y$, $p\not=(0,0)$,
satisfies  $E(h)=0$ and $P(h)\in \cO_{\widehat{X|Y}}(s)_p$ with $s<
b/a$.  Then: \begin{enumerate}
\item[i)]  If $\beta \not \in  a \N + b\N$ there exists $g \in
\cO_{\widehat{X|Y}}(s)_p$ with $P(h)=P(g)$ and $E(g)=0$.
\item[ii)] If $\beta \in  a \N + b\N$ there exists $g \in
\cO_{\widehat{X|Y}}(s)_p$ with $P(h)=P(g +\lambda_q
\phi_{\widetilde{v^{q}},p})$ and $E(g )=0$.
\end{enumerate}
\end{lemma}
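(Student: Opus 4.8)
The plan is to reduce everything to the recurrence (\ref{ecrecurrencia}) and to produce $g$ by solving it \emph{backwards}. Since $E(h)=0$, Lemma \ref{formaf} gives $h=\sum_{k=0}^{a-1}h^{(k)}$ with $h^{(k)}=\sum_{m\geq 0}h_{k+am}(t_1+\epsilon)^{\frac{\beta-bk}{a}-bm}x_2^{k+am}$, and, writing $P(h)=\sum_{k}\sum_{m}f_{k+am}(t_1+\epsilon)^{\frac{\beta-bk}{a}-b(m+1)}x_2^{k+am}$ exactly as in the proof of Theorem \ref{Ext1Gevrey}, the hypothesis $P(h)\in\cO_{\widehat{X|Y}}(s)_p$ with $s<b/a$ means $\frac{|f_{k+am}|}{(k+am)!^{s-1}}\leq\widetilde C\widetilde D^{\,m}$ for suitable constants. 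Both $P=\partial_1^b-\partial_2^a$ and $E_p$ preserve the grading of $\C[[t_1,x_2]]$ by the residue of the $x_2$--degree modulo $a$, so I look for $g=\sum_{k}g^{(k)}\in\ker E_p$ of the same shape as $h$, one index $k$ at a time; for fixed $k$ the equation $P(g)=P(h)$ is precisely (\ref{ecrecurrencia}) with right--hand side $f_{k+am}$.

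The essential observation is that the homogeneous multiplier $R_m:=\left(\frac{\beta-bk}{a}-bm\right)_b/(k+a(m+1))_a$ satisfies $|R_m|\sim\frac{b^b}{a^a}m^{b-a}\to\infty$; hence the homogeneous solutions of (\ref{ecrecurrencia}), which are the coefficient sequences of the $\phi_{v^k}$, are Gevrey of order exactly $b/a>s$, and since any two solutions differ by such a sequence there is \emph{at most one} Gevrey--$s$ solution for each index $k$ with $R_m\neq 0$ for all $m$. I construct it by reversing the recurrence,
\[
g_{k+am}:=\sum_{l\geq m}\frac{f_{k+al}}{(k+a(l+1))_a}\cdot\frac{1}{R_mR_{m+1}\cdots R_{l-1}},
\]
the empty product being $1$. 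Now $R_m$ sits in the denominator, so the fast decay of $1/(R_m\cdots R_{l-1})$ together with the bound on $f_{k+al}$ makes the series converge, and the Gevrey--$s$ estimate $\frac{|g_{k+am}|}{(k+am)!^{s-1}}\leq CD^m$ follows by the same ratio computation as in Theorem \ref{Ext1Gevrey} with the inequalities reversed; the point is that the term of index $l$ carries a factor $(l!)^{as-b}$ which tends to $0$ precisely because $s<b/a$. This routine but unavoidable estimate is the technical core. If $\beta\notin a\N+b\N$ then $\frac{\beta-bk}{a}\notin\N$, so $R_m\neq 0$ for every $m$ and every $k$; assembling the $g^{(k)}$ yields $g\in\cO_{\widehat{X|Y}}(s)_p$ with $P(g)=P(h)$ and $E(g)=0$, proving i).

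The main obstacle is case ii), caused by the single index $k=q$ with $\beta\in qb+a\N$. There the numerator $\left(\frac{\beta-bq}{a}-bm\right)_b=(m_0-bm)_b$, with $m_0=\frac{\beta-qb}{a}\in\N$, vanishes at the unique $m^{\ast}$ with $0\leq m_0-bm^{\ast}\leq b-1$, that is $m^{\ast}=m'-1$ for the shift $m'$ of Remark \ref{phi_w}; thus $R_{m^{\ast}}=0$ and the backward summation breaks across $m^{\ast}$ (the indices $k\neq q$ still satisfy $R_m\neq 0$ for all $m$ and are handled as in i)). I split this index: for $m\leq m^{\ast}$ one has $m_0-bm\geq 0$, so the low part is a polynomial, which I take equal to the low part of $h^{(q)}$; on the tail $\{m\geq m'\}$, where $R_m\neq 0$ again, I build the Gevrey--$s$ backward solution as above. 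The degenerate step $m=m^{\ast}$ forces $g_{q+am'}=-f_{q+am^{\ast}}/(q+am')_a$ independently of $g_{q+am^{\ast}}$, and this need not agree with the tail value. This is exactly where $\phi_{\widetilde{v^q}}$ enters: it lies in $\ker E$ because $A\widetilde{v^q}=\beta$, it is the homogeneous tail solution (its first nonzero coefficient is at $m=m'$), and a direct computation from its recurrence shows $P(\phi_{\widetilde{v^q}})=c\,(t_1+\epsilon)^{m_0-bm'}x_2^{q+am^{\ast}}$ is a single convergent monomial with $c\neq 0$, supported at level $m^{\ast}$ (consistent with Remark \ref{phi_w}). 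Hence adding $\lambda_q\phi_{\widetilde{v^q}}$ modifies only the level-$m'$ value and the coefficient $f_{q+am^{\ast}}$, and there is a unique $\lambda_q$ reconciling the forced value with the tail value. With this choice $g$ is Gevrey of order $s$, satisfies $E(g)=0$, and $P(g+\lambda_q\phi_{\widetilde{v^q},p})=P(h)$, which is ii).
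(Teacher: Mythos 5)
Your proposal is in substance the paper's own proof: the paper performs the same reduction, via Lemma \ref{formaf}, to the recurrence (\ref{ecrecurrencia}) in each residue class modulo $a$; the Gevrey-$s$ representative it exhibits, namely $h^{(k)}-\lambda_k\phi_{v^k,p}$ with $\lambda_k=-\sum_{r\geq 0}\frac{(k+ar)!\,f_{k+ar}}{k!\,(\frac{\beta-bk}{a})_{b(r+1)}}$, has coefficients that coincide term by term with your backward sums (once these are corrected as noted below); and its case ii) treats the degenerate index $q$ exactly as you do, replacing $\phi_{v^q}$ by $\phi_{\widetilde{v^q}}$, whose image under $P$ is a single monomial at level $m^{*}=m'-1$. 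Two repairs are needed in your write-up: the product in the backward sum must run through $R_l$, that is $g_{k+am}=\sum_{l\geq m}\frac{f_{k+al}}{(k+a(l+1))_a\,R_mR_{m+1}\cdots R_l}$, since with your indexing the sequence solves $g_{k+a(m+1)}=R_m\bigl(g_{k+am}-\frac{f_{k+am}}{(k+a(m+1))_a}\bigr)$ rather than (\ref{ecrecurrencia}); and the estimate you defer is more than a ``reversed ratio test,'' because the bound $|g_{k+am}|\leq C\,D^{m}(k+am)!^{s-1}$ needs constants uniform in $m$, which is precisely what the paper supplies through the entire functions $g_m(z)$ and the inequality $g_{m+1}(D)\leq\widehat{C}\,g_m(D)$.
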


\begin{proof}
Since $E(h)=0$ then $(E+ab)(\widehat{f})=0$ for $\widehat{f}:=P(h)$.
Reasoning as in the proof of Theorem \ref{Ext1Gevrey} we have the
recurrence relation (\ref{ecrecurrencia}) for the coefficients of
$h$ and $\widehat{f}$. Let us prove first that for all $k=0,\ldots
,a-1$ such that $\frac{\beta -bk}{a}\notin \N$ there exists
$\lambda_k \in \C$ with $h^{(k)}-\lambda_k \phi_{v^k ,p}\in
\cO_{\widehat{X|Y}}(s)_p$.

Since $ h_{k} x_1^{\frac{\beta -b k}{a}} x_2^{k}$ is holomorphic in
a neighborhood  of $p$ and $E (h_{k} x_1^{\frac{\beta -b k}{a}}
x_2^{k})=0$ we can assume without loss of generality that $h_k =0$
obtaining:
\begin{align}
h_{k+a(m+1)}=- \frac{(\frac{\beta
-bk}{a})_{b(m+1)}}{(k+a(m+1))!}\sum_{r=0}^{m}
\frac{(k+ar)!}{(\frac{\beta -bk}{a})_{b(r+1)}} f_{k+ar}.
\label{ecexplicita}
\end{align} Recall that the coefficient of $x_1^{\frac{\beta
-b k}{a}-bm} x_2^{k+am}$ in $\phi_{v^k ,p}$ is $$\Gamma
[v^{k};u(m)]= \frac{(\frac{\beta -b k}{a})_{bm} k!}{(k+am)! }.$$
Therefore  for all $\lambda_k \in \C$ we get:
$$h_{k+a(m+1)}-\lambda_k \Gamma [v^{k};u(m +1)]=\frac{(\frac{\beta
-bk}{a})_{b(m+1)}}{(k+a(m+1))!} \left(- k! \lambda_k - \sum_{r=0}^m
\frac{(k+ar)! f_{k+ar}}{(\frac{\beta -bk}{a})_{b(r+1)}}\right).$$
Since  $as<b$ we can choose $$\lambda_k = - \sum_{r\geq 0}
\frac{(k+ar)! f_{k+ar}}{k! (\frac{\beta -bk}{a})_{b(r+1)}} \in \C$$
and, because $f^{(k)}\in \cO_{\widehat{X|Y}}(s)_p$,  there exist
real numbers $C>0, D>0 $ such that $|f_{k+ar}|\leq C D^r (k+ a
r)!^{s-1}$ for all $ r\geq 0$.  Then:
$$h_{k+a(m+1)}-\lambda_k \Gamma [v^{k};u(m +1)]=\frac{(\frac{\beta
-bk}{a})_{b(m+1)}}{(k+a(m+1))!} \sum_{r\geq m+1} \frac{(k+ar)!
f_{k+ar}}{(\frac{\beta -bk}{a})_{b(r+1)}}.$$ Equivalently, $$
h_{k+a(m+1)}-\lambda_k \Gamma [v^{k};u(m +1)] =\sum_{r\geq 0}
\frac{(k+a(r+m+1))_{ar} f_{k+a(r+m+1)}}{(\frac{\beta
-bk}{a}-(m+1)b)_{b(r+1)}}.
$$
%
%
%
%
%
%
The series $$g_m (z)=\sum_{r\geq 0} \frac{(k+a(r+m+1))_{ar}^{s}
}{|(\frac{\beta -bk}{a}-(m+1)b)_{b(r+1)}|} z^r$$ is an entire
function in the variable $z$ for all $m\geq 0$. To prove that it is
enough to apply the d'Alembert's ratio test using $b>s a$:
$$\lim_{r\rightarrow  \infty} \frac{(k+a(r+m+1))^s (k+a(r+m+1)-1)^s \cdots (k+
a (r+m)+1)^s }{|\frac{\beta -bk}{a}-b(r+m+1)|\cdots |\frac{\beta
-bk}{a}-(r+m+2)b +1|}=$$ $$ \lim_{r\rightarrow  \infty} \frac{(a
r)^{a s}}{(b r)^b}=0.$$ In particular, $0< g_m (D)<\infty$ and
$$|h_{k+a(m+1)}-\lambda_k \Gamma [v^{k};u(m +1)]|\leq  C  g_m (D) D^{m+1} (k+ a m)!^{s-1}.$$

It can be proved (by using elementary properties of the Pochhammer
symbol and standard estimates) that there exists $m_{2} ,
\widehat{C} \in \N$ such that $\forall m\geq m_{2}$, $ g_{m+1}
(D)\leq \widehat{C} g_m (D)$. This implies that $|g_{m+1}(D)|\leq
\widehat{C}^{m+1-m_2 } g_{m_2 } (D)$, $\forall m \geq m_2$. Then,
taking $\widetilde{C}=\widehat{C}^{-m_2 } g_{m_2 } (D)
>0$, we obtain:
$$|h_{k+a(m+1)}-\lambda_k \Gamma [v^{k};u(m +1)]|\leq \widetilde{C}
(\widehat{C}D)^{m+1} (k+ a m)!^{s-1}$$ for all $m\geq 0$. Hence
$h^{(k)} - \lambda_k \phi_{v^k ,p}\in \cO_{\widehat{X|Y}}(s)_p$.
If $\beta \notin a\N +b\N$ then we have that $g :=h-\sum_{k=0}^{a-1}
\lambda_k \phi_{v^k ,p} \in \cO_{X|Y}(s)_p$ satisfies statement {\em
i)}.

{\em ii)} If $\beta\in a\N+b\N$ there exists a unique $q\in
\{0,1,\ldots ,a-1\}$ verifying $\frac{\beta-bq}{a}=m_0 \in \N$. For
$k\neq q$ we have as before that $h^{(k)} - \lambda_k \phi_{v^k ,p}
\in \cO_{\widehat{X|Y}}(s)_p$. For $k=q$ we can assume without loss
of generality that $h_{q+am}=0$ for $m=0,1,\ldots ,[m_0 /b]$ (here
$[-]$ denotes the integer part) obtaining an expression for $h_{q+a
(m+1)}$ similar to Equality (\ref{ecexplicita}) for $m\geq [m_0
/b]$. Then by using $\phi_{\widetilde{v^{q}}}$ instead of
$\phi_{v^q}$ we get, alike in {\em i)}, that $h^{}-\lambda_q
\phi_{\widetilde{v^{q}}} \in \cO_{\widehat{X|Y}}(s)_p$.  Hence $ g
:=h-\sum_{k=0, k\neq q}^{a-1} \lambda_k \phi_{v^k ,p}- \lambda_q
\phi_{\widetilde{v^{q}}} \in \cO_{\widehat{X|Y}}(s)_p$ satisfies
{\em ii)}.
\end{proof}

\begin{theorem}\label{Ext1Gevrey2}
We have $$\dim_\CC (\cE xt^1 (\cM_A(\beta),
\cO_{\widehat{X|Y}}(s))_p )= \left\{ \begin{array}{lcl} 1 &
{\mbox{for}} & \beta\in a\NN + b\NN
\\ 0 & {\mbox{for}} & \beta\not\in a\NN + b\NN
\end{array} \right.$$ for all $p\in Y$, $p\not= (0,0)$ and $1 \leq s
< \frac{b}{a}$. Moreover, if  $\beta \in a\N +b \N$ then $\cE xt^1
(\cM_A(\beta), \cO_{\widehat{X|Y}}(s))_p$ is generated by the class
of $(P(\phi_{\widetilde{v^{q}},p}),0)$.
\end{theorem}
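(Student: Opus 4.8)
The plan is to represent the group in question by the germ at $p$ of the solution complex of Remark \ref{RHomF} with $\cF=\cO_{\widehat{X|Y}}(s)$, so that $\cE xt^1(\cM_A(\beta),\cO_{\widehat{X|Y}}(s))_p=\operatorname{Ker}(\psi_1^{\ast})/\operatorname{Im}(\psi_0^{\ast})$, where $\psi_0^{\ast}(h)=(P(h),E(h))$ and $\psi_1^{\ast}(f,g)=(E+ab)(f)-P(g)$. The first step is to put every cocycle into a normal form. Given $(f,g)\in\operatorname{Ker}(\psi_1^{\ast})$, i.e. $(E+ab)(f)=P(g)$ with $f,g\in\cO_{\widehat{X|Y}}(s)_p$, Lemma \ref{Eepsilonsobre} provides $h\in\cO_{\widehat{X|Y}}(s)_p$ with $E(h)=g$; subtracting the coboundary $\psi_0^{\ast}(h)=(P(h),E(h))$ shows that $(f,g)$ is cohomologous to $(\widehat f,0)$ with $\widehat f:=f-P(h)$, which satisfies $(E+ab)(\widehat f)=0$. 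Thus every class is represented by a pair $(\widehat f,0)$ with $\widehat f\in\cO_{\widehat{X|Y}}(s)_p$ and $(E+ab)(\widehat f)=0$.

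Next I would produce a formal preimage of $\widehat f$ under $P$ inside $\operatorname{Ker}(E)$. Since $E+ab=E_p(\beta-ab)$, the analogue of Lemma \ref{formaf} for this operator forces $\widehat f$ to have exactly the shape of the series denoted $\widehat f$ in the proof of Theorem \ref{Ext1Gevrey} (the exponent $\frac{(\beta-ab)-bk}{a}-bm$ equals $\frac{\beta-bk}{a}-b(m+1)$); reasoning as there, the recurrence (\ref{ecrecurrencia}) admits a formal solution, so there is $H\in\cO_{\widehat{X|Y},p}$ with $E(H)=0$ and $P(H)=\widehat f$. As $P(H)=\widehat f\in\cO_{\widehat{X|Y}}(s)_p$ and $s<b/a$, Lemma \ref{lemma-ext1} applies. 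If $\beta\notin a\N+b\N$, part (i) yields $g'\in\cO_{\widehat{X|Y}}(s)_p$ with $E(g')=0$ and $P(g')=\widehat f$, so $(\widehat f,0)=\psi_0^{\ast}(g')$ is a coboundary and $\cE xt^1=0$. If $\beta\in a\N+b\N$, part (ii) yields $g'\in\cO_{\widehat{X|Y}}(s)_p$ with $E(g')=0$ and $P(g'+\lambda_q\phi_{\widetilde{v^q},p})=\widehat f$, whence $(\widehat f,0)-\lambda_q(P(\phi_{\widetilde{v^q},p}),0)=\psi_0^{\ast}(g')$ and the class of $(\widehat f,0)$ is a multiple of that of $(P(\phi_{\widetilde{v^q},p}),0)$. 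This already gives $\dim_\C\cE xt^1\leq 1$ and singles out the proposed generator, which is a genuine cochain and cocycle since $E(\phi_{\widetilde{v^q},p})=0$ and, by Remark \ref{phi_w}, $P(\phi_{\widetilde{v^q},p})\in\cO_{X,p}$.

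It remains to prove that this generator is nonzero when $\beta\in a\N+b\N$, and this is the step I expect to be the main obstacle. Suppose for contradiction that $(P(\phi_{\widetilde{v^q},p}),0)=\psi_0^{\ast}(g')$ for some $g'\in\cO_{\widehat{X|Y}}(s)_p$; then $P(\phi_{\widetilde{v^q},p}-g')=0$, and since $E(\phi_{\widetilde{v^q},p})=E(g')=0$ also $E(\phi_{\widetilde{v^q},p}-g')=0$. Hence $\phi_{\widetilde{v^q},p}-g'$ is a formal solution of $\cM_A(\beta)$ at $p$, so by Proposition \ref{dim_formal_a} it equals a combination $\sum_{k=0}^{a-1}c_k\phi_{v^k,p}$. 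Here I would use a support argument: each $\phi_{v^k,p}$ is supported on monomials whose $x_2$-exponent is $\equiv k\pmod a$, while $\phi_{\widetilde{v^q},p}$ is supported on $x_2$-exponents $\equiv q\pmod a$; projecting the identity $\phi_{\widetilde{v^q},p}=g'+\sum_k c_k\phi_{v^k,p}$ onto the part with $x_2$-exponent $\equiv q\pmod a$ leaves only $\phi_{v^q,p}$ among the $\phi_{v^k,p}$ and yields $\phi_{\widetilde{v^q},p}=(g')^{(q)}+c_q\phi_{v^q,p}$, where $(g')^{(q)}$ is the corresponding part of $g'$. Such a projection does not increase the Gevrey index, so $(g')^{(q)}$ has index $\leq s<b/a$, while $\phi_{v^q,p}$ is a polynomial by Lemma \ref{gevrey_index_lemma}; hence the right-hand side has Gevrey index strictly less than $b/a$, contradicting the fact recorded in Remark \ref{phi_w} that $\phi_{\widetilde{v^q},p}$ has Gevrey index exactly $b/a$. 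Therefore the generator is nonzero, $\dim_\C\cE xt^1=1$, and it is generated by the class of $(P(\phi_{\widetilde{v^q},p}),0)$, which completes the computation.
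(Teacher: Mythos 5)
Your proof follows the paper's own argument essentially step for step: the reduction to cocycles $(\widehat{f},0)$ with $(E+ab)(\widehat{f})=0$ via Lemma \ref{Eepsilonsobre}, the shape of $\widehat{f}$ via Lemma \ref{formaf}, the formal preimage $H$ with $P(H)=\widehat{f}$, $E(H)=0$ via the recurrence (\ref{ecrecurrencia})--(\ref{ecexplicita}), and Lemma \ref{lemma-ext1} to identify each class as a multiple of $(P(\phi_{\widetilde{v^q},p}),0)$, which is a genuine cocycle by Remark \ref{phi_w}. The only difference is your final paragraph: the paper stops after noting that $P(\phi_{\widetilde{v^q}})$ is a Laurent monomial holomorphic at $p$, leaving the nonvanishing of the generator's class implicit, whereas you prove it explicitly (and correctly) by projecting onto $x_2$-exponents congruent to $q$ modulo $a$ and comparing Gevrey indices, a worthwhile detail that completes the dimension count.
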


\begin{proof}
By definition $$\mathcal{E}xt^1 ( \mathcal{M}_A (\beta ) ,
\cO_{\widehat{X|Y}} (s))_{p}=\frac{\{ (f,g)\in (\cO_{\widehat{X|Y}}
(s)_{p})^2 : \; (E+ab)(f)=P(g) \}}{\{ (P(h),E(h)):\; h\in
\cO_{\widehat{X|Y}}(s)_{p}\} }$$ As in the  proof of Theorem
\ref{Ext1Gevrey} we can assume $g=0$ and then $(E+ab)(f)=0$. This
implies that $f=\sum_{k=0}^{a-1} f^{(k)}$ (see Lemma \ref{formaf})
with
$$f^{(k)}=\sum_{m\geq 0} f_{k+ a m} x_1^{\frac{\beta -b k}{a}-b (m +1 )} x_2^{k+a m} $$
We can then consider  $h\in \cO_{\widehat{X|Y},p}$ as in
(\ref{ecexplicita}) such  that $P(h)=f $ and $E(h)=0$ and apply
Lemma \ref{lemma-ext1}. Furthermore, it is easy to prove that
$P(\phi_{\widetilde{v^{q}}})$ is a Laurent monomial term with pole
along $\{x_1 =0\}$ and hence holomorphic  at any point $p\in
Y\setminus\{(0,0)\}$. This finishes the proof.
\end{proof}

\begin{remark}
Notice  that the generator $(P(\phi_{\widetilde{v^q}}),0)$ does not
define a germ at the origin although the dimension of
$\mathcal{E}xt^1 ( \mathcal{M}_A (\beta ) , \cO_{\widehat{X|Y}}
(s))_{(0,0)}$ is one (see Proposition \ref{ext1-origin}).
Nevertheless, it can be checked that the class of $(0,\phi_{v^q })$
is a generator of $\mathcal{E}xt^1 ( \mathcal{M}_A (\beta ) ,
\cO_{\widehat{X|Y}} (s))_{p}$ at any point of $p\in Y$.
\end{remark}

\begin{proposition} \label{ext1(QY(s))iszero}
For all  $\beta \in \C$ we have $$\mathcal{E} xt^1 (\mathcal{M}_A
(\beta ) ,\cQ_Y (s))=0$$ for all $1\leq s\leq \infty$.
\end{proposition}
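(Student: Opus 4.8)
The plan is to reduce the statement to a stalkwise computation at the points of $Y$ and then feed in the $\cE xt$ groups already computed in this section through the long exact sequence attached to (\ref{exact-sequence-gevrey-s}). Since $\cQ_Y(s)$ is supported on $Y$, the sheaf $\cE xt^1(\cM_A(\beta), \cQ_Y(s))$ is supported on $Y$ as well, so it suffices to prove that every stalk at a point $p \in Y$ vanishes. At $p = (0,0)$ this is immediate: Theorem \ref{nulidad_en_origen} gives $\Irr_Y^{(s)}(\cM_A(\beta))_{(0,0)} = 0$, hence in particular $\cE xt^1(\cM_A(\beta), \cQ_Y(s))_{(0,0)} = 0$. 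So all the work lies at a point $p = (\epsilon, 0)$ with $\epsilon \neq 0$.

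At such a $p$ I would apply $\RR\cH om_{\cD_X}(\cM_A(\beta), -)$ to (\ref{exact-sequence-gevrey-s}) and isolate the segment
$$\cE xt^1(\cM_A(\beta), \cO_{X|Y})_p \stackrel{\alpha}{\longrightarrow} \cE xt^1(\cM_A(\beta), \cO_{\widehat{X|Y}}(s))_p \longrightarrow \cE xt^1(\cM_A(\beta), \cQ_Y(s))_p \longrightarrow \cE xt^2(\cM_A(\beta), \cO_{X|Y})_p.$$
Because $\cO_{X|Y} = \cO_{\widehat{X|Y}}(1)$, the case $s=1$ of Corollary \ref{Ext2} forces $\cE xt^2(\cM_A(\beta), \cO_{X|Y})_p = 0$, so the sequence identifies $\cE xt^1(\cM_A(\beta), \cQ_Y(s))_p$ with $\coker(\alpha)$. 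The whole proposition then comes down to showing that the comparison map $\alpha$ is surjective.

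I would establish surjectivity of $\alpha$ by splitting into cases according to the target, which is known from the earlier results. If $s \geq b/a$ the target vanishes by Theorem \ref{Ext1Gevrey}; if $1 \leq s < b/a$ and $\beta \notin a\N + b\N$ it vanishes by Theorem \ref{Ext1Gevrey2}; in both situations $\alpha$ is trivially surjective. The only genuinely nontrivial case is $1 \leq s < b/a$ with $\beta \in a\N + b\N$, where Theorem \ref{Ext1Gevrey2} says the target is one-dimensional, generated by the class of $(P(\phi_{\widetilde{v^q},p}), 0)$. Here I would invoke the Remark following Theorem \ref{Ext1Gevrey2}: $P(\phi_{\widetilde{v^q}})$ is a Laurent monomial with pole only along $\{x_1 = 0\}$, hence holomorphic at $p$, so that $P(\phi_{\widetilde{v^q},p}) \in \cO_{X|Y,p}$. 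Since $A\widetilde{v^q} = \beta$ (Remark \ref{phi_w}) gives $E(\phi_{\widetilde{v^q}}) = 0$, the relation $PE = (E+ab)P$ shows $(E+ab)(P(\phi_{\widetilde{v^q},p})) = 0$, so that $(P(\phi_{\widetilde{v^q},p}), 0)$ is already a cocycle for the solution complex with values in $\cO_{X|Y}$ (Remark \ref{RHomF}). Its class in $\cE xt^1(\cM_A(\beta), \cO_{X|Y})_p$ maps under $\alpha$ precisely to the chosen generator, so $\alpha$ is onto in this case too.

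Combining the cases yields $\coker(\alpha) = 0$ at every $p \neq (0,0)$, and together with the vanishing at the origin this gives $\cE xt^1(\cM_A(\beta), \cQ_Y(s)) = 0$ for all $s$. The main obstacle is exactly the last case: there the target $\cE xt^1$ with values in $\cO_{\widehat{X|Y}}(s)$ is genuinely nonzero, so the result cannot follow from a naive dimension count. What saves the argument is the structural observation that the distinguished generator already comes from a convergent (holomorphic) section, and therefore becomes trivial in the quotient sheaf $\cQ_Y(s)$; once this is seen, everything else is formal homological algebra built on the stalkwise computations of Corollary \ref{Ext2} and Theorems \ref{Ext1Gevrey} and \ref{Ext1Gevrey2}.
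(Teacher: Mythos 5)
Your proof is correct, and its skeleton coincides with the paper's: reduction to stalks, the vanishing at the origin via Theorem \ref{nulidad_en_origen}, the long exact sequence attached to (\ref{exact-sequence-gevrey-s}), Corollary \ref{Ext2} at $s=1$ to kill the $\cE xt^2(\cM_A(\beta),\cO_{X|Y})_p$ term, Theorem \ref{Ext1Gevrey} for $s\geq b/a$, and Theorem \ref{Ext1Gevrey2} for $s<b/a$. Where you genuinely diverge is the one nontrivial case $1\leq s<b/a$, $\beta\in a\N+b\N$. The paper settles it by a dimension count: Theorem \ref{Ext0cociente} gives $\cE xt^0(\cM_A(\beta),\cQ_Y(s))_p=0$ for $s<b/a$, hence the comparison map $\alpha$ is \emph{injective}, and since source and target are both one-dimensional by Theorem \ref{Ext1Gevrey2} (applied at $s=1$ and at $s$), $\alpha$ is an isomorphism. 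You instead prove \emph{surjectivity} of $\alpha$ directly, by lifting the explicit generator: $P(\phi_{\widetilde{v^q},p})$ is holomorphic at $p$ (this Laurent-monomial fact is established inside the proof of Theorem \ref{Ext1Gevrey2}, not in the Remark after it --- a harmless misattribution), $E(\phi_{\widetilde{v^q}})=0$ because every monomial of $\phi_{\widetilde{v^q}}$ has $A$-degree $\beta$ (Remark \ref{phi_w}), and $PE=(E+ab)P$ then makes $(P(\phi_{\widetilde{v^q},p}),0)$ a cocycle of the solution complex with values in $\cO_{X|Y}$ whose image under $\alpha$ is the generator. Your route buys independence from Theorem \ref{Ext0cociente} (whose own proof leans on Theorem \ref{Ext1Gevrey} and the Gevrey-index computations), and it makes the mechanism of the vanishing transparent: the unique obstruction class in $\cE xt^1(\cM_A(\beta),\cO_{\widehat{X|Y}}(s))_p$ is represented by a convergent section and therefore dies in the quotient $\cQ_Y(s)$. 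The paper's route is shorter given the inventory of results already proved, at the price of invoking one more of them.
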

\begin{proof}
Since $\mathcal{E} xt^1 (\mathcal{M}_A (\beta ) ,\cQ_Y (s))_{p}=0$
for $p=(0,0)$ (see Section \ref{irr-at-zero}) it is enough to prove
the equality for all $p\in Y\setminus \{(0,0)\}$.

From Corollary  \ref{Ext2} (for $s=1$), Theorem \ref{Ext1Gevrey} and
the long exact sequence in cohomology we get the equality for $s\geq
b/a$. Using again Corollary \ref{Ext2} (for $s=1$), Theorem
\ref{Ext1Gevrey2}, Theorem \ref{Ext0cociente} (only necessary in the
case $\beta \in a\NN +b\NN$) and the long exact sequence in
cohomology we get the equality for $1\leq s <b/a$.
\end{proof}

\begin{remark} \label{-ab} We can also prove,  with similar methods
to the ones presented in this paper, that the irregularity complex
$\Irr_Z(\cM_A(\beta))$ is zero for any $\beta \in \CC$ and for
$A=(-a\,\, b)$ with $a,b$ strictly positive integer numbers and
${\rm gcd}(a,b)=1$. Here $Z$ is either $x_1=0$ or $x_2=0$. Let us
notice that the characteristic variety of $\cM_A(\beta)$ is defined
by the ideal $(\xi_1\xi_2, -ax_1\xi_1+bx_2\xi_2)$ and then its
singular support is the union of the two coordinates axes in
$\CC^2$.
\end{remark}

\section*{Conclusions}

1) In Sections \ref{irr-at-zero} and \ref{irr-at-p} we have proved
that the irregularity  complex $\Irr_Y^{(s)}(\cM_A(\beta))$  is zero
for $1\leq s < b/a$  and is  concentrated in degree 0 for $b/a\leq
s\leq \infty$ (see Theorems  \ref{nulidad_en_origen} and
\ref{Ext0cociente} and Proposition \ref{ext1(QY(s))iszero}).
Moreover, the description of a basis of $\cE
xt^0_{\cD_X}(\cM_A(\beta),\cQ_Y(s))_p$ for $p\in Y$, $p\not= (0,0)$,
and $b/a\leq s \leq \infty$  (see Theorem \ref{Ext0cociente}) proves
that the cohomology of the complex $\Irr_Y^{(s)}(\cM_A(\beta))$ is
{\em constructible}  on $Y$, with respect to the stratification
given by $\{\{(0,0)\}, Y\setminus \{(0,0)\}\}$. This can be also
deduced from a Theorem of M. Kashiwara \cite{kashiwara-overdet-75}.
From the form of the basis it is also easy to see that the
eigenvalues of the corresponding monodromy are simply
$\exp(\frac{2\pi i (\beta - bk)}{a})$ for $k=0,\ldots,a-1$. Notice
that for $\beta\in \ZZ$ one eigenvalue (the one corresponding to the
unique $k=0,\ldots,a-1$ such that $\frac{\beta - bk}{a}\in \ZZ$) is
just 1. See Subsection \ref{ext0_Qs_epsilon} for notations.

2) From the previous results we can also  prove that the complex
$\Irr_Y^{(s)}(\cM_A(\beta))$ is a {\em perverse sheaf} on $Y$ for
any $1\leq s \leq \infty$. This is a very particular case of a
general result of Z. Mebkhout \cite[Th. 6.3.3]{Mebkhout}. To this
end, as $\Irr_Y^{(s)}(\cM_A(\beta))$ is concentrated in degree 0, it
is enough to prove the {\em co-support} condition, which is
equivalent (see \cite{BBD}) to prove that the hypercohomology
$\cH^0_{\{p\}}(\Irr_Y^{(s)}(\cM_A(\beta)))$ with support on $\{p\}$
is zero, for $p\in Y$. This is obvious because $\cE
xt^0_{\cD_X}(\cM_A(\beta), \cQ_Y(s))$ has no sections supported on
points.

3) We have also proved (see Theorem \ref{Ext0cociente} and
Proposition \ref{ext1(QY(s))iszero}) that the {\em Gevrey
filtration} $\Irr^{(s)}_Y(\cM_A(\beta))$ has a unique gap for
$s=b/a$. So the only {\em analytic slope} of $\cM_A(\beta)$ with
respect to $Y$ is $b/a$ \cite[D\'{e}f. 6.3.7]{Mebkhout}. On the other
hand it is also known (see \cite[Th. 3.3]{hartillo_trans} and
\cite{Castro-Takayama}) that the only {\em algebraic slope} of
$\cM_A(\beta)$ is also $b/a$. This fact is a very particular case of
the slope comparison theorem of Y. Laurent and Z. Mebkhout \cite[Th.
2.4.2]{Laurent-Mebkhout}.

\end{document}